 \newcommand {\theoremstyle} [1] { }
\newenvironment{proof}{{\noindent\it\underline{Proof}}:}{\hfill$\Box$}
\newtheorem{thm}{Theorem}[section]
 \theoremstyle{plain}
 \newtheorem{lem}[thm]{Lemma} 
 \newtheorem{cor}[thm]{Corollary}
 \theoremstyle{definition}
 \theoremstyle{remark}
 \newtheorem{rem}[thm]{Remark}
 \newtheorem{Def}{Definition}
\def\R{\mathbb{R}}
\author{Pablo Amster$^1$ and Colin Rogers$^2$}
\title{A discretisation of the Ermakov-Painlev\'e II equation:  Dirichlet and Robin-type boundary value problems}
\date{}
\begin{document}
\maketitle

\begin{center}
$^1$ Departamento de Matem\'atica, \\
Facultad de Ciencias Exactas y Naturales\\
Universidad de Buenos Aires
and 
IMAS - CONICET\\
Ciudad Universitaria, Pabell\'on I,
(1428) Buenos Aires, Argentina 
\end{center}
\begin{center}
$^2$ {School of Mathematics and Statistics}\\
   {The University of New South Wales}\\
    {Sydney, NSW 2052, Australia}
    \end{center}
\begin{center}
 pamster@dm.uba.ar -- c.rogers@unsw.edu.au
\end{center}
\bigskip

 \begin{abstract} 
 Two-point boundary value problems for a discrete Ermakov-Painlev\'e II equation are analysed by means of topological methods. In  addition, an alternative variational approach is detailed. Existence of solutions is established for
appropriate choice of parameters.

\noindent \textit{Keywords}: Ermakov-Painlev\'e II equation; Difference equations; Dirichlet boundary conditions; Nonlinear Robin conditions.

\noindent \textit{MSC-2020}: 39A27; 34B15.
 
 \end{abstract}

\section{Introduction}
Ermakov-type nonlinear coupled systems with genesis in the classical work  \cite{ve80} have diverse physical applications in both physics and continumm mechanics \cite{crws18}. These occur, `inter alia' in nonlinear optics \cite{wwhhjm68,crbmkcha10,cgjmay72,fcmlbz90,agylaspsst91,agylasps91,agylas93,crbmha12}, spiralling elliptic soliton analysis \cite{addbmdyk10}, Bose-Einstein condensate theory \cite{jaadeo11}, oceanographic warm-core eddy evolution \cite{cr89}, 2+1-dimensional magnetogasdynamics \cite{crws11} and rotating shallow water system theory \cite{crha10,ha11}.

Nonlinear coupled two-component systems of Ermakov-Ray-Reid type, as originally introduced in \cite{jr80,jrjr80}, adopt the form
\begin{equation*}
\ddot{\alpha}+w(t)\alpha=\frac{1}{\alpha^2\beta}\Phi(\beta/\alpha)\ , \quad \ddot{\beta}+w(t)\beta=\frac{1}{\alpha\beta^2}\Psi(\alpha/\beta)\ ,
\end{equation*}
wherein a dot indicates a derivative with respect to the independent variable $t$. Such systems admit a distinctive integrable of motion, namely the Ermakov invariant
\begin{equation*}
\mathrm{I}=\frac{1}{2}(\alpha\dot{\beta}-\beta\dot{\alpha})^2+\int^{\beta/\alpha}\Phi(z)dz+\int^{\alpha/\beta}\Psi(w)dw
\end{equation*}
together with concomitant nonlinear superposition principles.

In \cite{crchjr93}, 2+1-dimensional Ermakov-Ray-Reid systems were constructed. Extension of Ermakov systems to arbitrary order and dimension which preserve admittance of the characteristic invariant property were subsequently derived in \cite{crws96,wscrab96}. Therein, in particular, alignment of a 2+1-dimensional Ermakov system and an extension of the Ernst system of general relativity was shown to produce a novel integrable hybrid of the canonical 2+1-dimensional sinh-Gordon system of \cite{bkcr91,bkcr93} and a Ermakov-type system. Multi-component Ermakov systems were introduced in \cite{crws94} in the context of symmetry reduction in n-layer hydrodynamics. Sequences of Ermakov-Ray-Reid systems were shown therein to be linked via Darboux-type transformations \cite{crws02}.

In \cite{cacrurao90}, it was established that Ermakov-Ray-Reid systems possess novel underlying linear structure of a kind other than the linear-representation type admitted by solitonic systems. In addition, autonomisation of such Ermakov systems was obtained therein via a class of involutory transformations which subsequently has been extensively applied to reduce modulated physical systems to their unmodulated canonical integrable counterparts (qv \cite{crwsbm20,cr23} and literature cited therein).

The important connections between the six classical Painlev\'e equations PI-PVI and admitted symmetry reduction in modern soliton theory are 
well-esta-blished \cite{rc99,pc03}. Painlev\'e II notably has diverse physical applications not only in a solitonic context but also in boundary value problems associated with the Nernst-Planck model of ion transport \cite{wn82,mp90}. Thus, in \cite{lb64}, Painlev\'e II was derived `ab initio' in the analysis of a boundary value problem which determines the electric field distribution in a region occupied by an electrolyte. In \cite{lb1964}, this Painlev\'e reduction was applied in a description of the electrical structure of interfaces. A B\"acklund transformation admitted by Painlev\'e II as originally obtained in \cite{nl71} was applied in \cite{crabws99} to construct exact analytic representations for the electric field and ion distrubutions in electrolytic boundary value problems wherein the ratio of fluxes of negative and positive ions at the boundary adipts one of a sequence of values. In Bass \textit{et al} \cite{lbjncrws10}, iterative application of the B\"acklund transformation of \cite{nl71} admitted by Painlev\'e II was used to generate hierarchies of exact solutions to physically relevant boundary value problems for the two-ion Nernst-Planck system. Painlev\'e structure in a multi-ion context was isolated in \cite{rccrws07}. Therein, in particular, in the three-ion case, reduction was obtained to the classical Painlev\'e XXXIV equation which turns out to have a hybrid Ermakov-Painlev\'e connection via its positive solutions such as arise in the representations for ion-concentrations in a Nernst-Planck context \cite{lbjncrws10}.

Hybrid Ermakov-Painlev\'e II systems were originally derived in \cite{cr14} via a class of wave packet reductions of an n+1-dimensional Manakov-type NLS system incorporating de Broglie-Bohm potential terms. Key invariants were isolated and an algorithm presented whereby exact solution may be obtained via a procedure involving the classical Ermakov nonlinear superposition principle. Application to the analysis of transverse wave propagation in a generalised Mooney-Rivlin hyperelastic materials was shown to lead to a pair of canonical single component Ermakov-Painlev\'e II reductions. In \cite{crws16}, coupled Ermakov-Painlev\'e II systems with underlying Hamiltonian structure were shown to possess sequences of exact solutions generated via iterative action of the B\"acklund transformation admitted by the canonical Painlev\'e II equation.

The preceding attests to the established important applications of both Ermakov-type systems and the classical Painlev\'e equations to the analysis of nonlinear physical systems. Hybrid Ermakov-Painlev\'e II reductions since their introduction in \cite{cr14} have subsequently likewise proved to have diverse physical applications, notably in cold plasma physics \cite{crpc18}, Korteweg capillarity theory \cite{crpc17} and multi-ion electrodiffusion \cite{pacr15}. In \cite{cr17}, hybrid Ermakov-Painlev\'e II-IV systems were derived in an extended Ermakov-Ray-Reid context. Their admitted Ermakov invariants together with the associated canonical Painlev\'e equation were used to establish integrability properties. Novel resonant Ermakov-NLS systems which admit symmetry reduction to a hybrid Ermakov-Painlev\'e II system have been subsequently established in \cite{crws2018}.

B\"acklund transformations have been previously applied to derive discretisation in nonlinear heterogeneous elastodynamics and anisentropic gasdynamics \cite{crws97,ws98}. Thus, in \cite{crws97} the classical Bianchi nonlinear superposition principle as derived via invariance of the solitonic sine-Gordon equation under a B\"acklund transformation whereby pseudospherical surface representations may be iteratively generated \cite{crws02,cr2023} was applied to obtain integrable discretisation of characteristic equations that arise via a Monge-Amp\`ere reduction in nonlinear continuum mechanics \cite{crws82}. In \cite{ws98}, it was established that integrable discretisation of characteristic equations associated with a 1+1-dimensional anisentropic system may be derived via re-interpretation of a B\"acklund transformation admitted by the classical Tzitzeica equation. The latter has origin in the geometry of affinsph\"aren surfaces \cite{wscr94}.

In \cite{ws97}, the classical Ermakov equation was discretised in a way that preserved the inherent nonlinear superposition principle. The latter may be derived via Lie group invariance \cite{crur89}. Lie group theoretical generalisation and discretisation of Ermakov-type equations was subsequently systematically established in \cite{crwspw97}. In \cite{ah99}, an exact discretisation of the classical Ermakov equation was derived via an admitted B\"acklund transformation of the latter. Discretisation of Painlev\'e equations was obtained in a novel manner in \cite{pcemhw00} by means of B\"acklund transformations admitted by their continuous counterparts. In particular, a discretisation was obtained thereby of the classical Painlev\'e XXXIV equation which arises in symmetry reduction of certain solitonic systems. It is linked to the canonical hybrid Ermakov-Painlev\'e II equation of \cite{cr14} in a manner recorded in \cite{crws16}.

Here, discretisation is considered of the Ermakov-Painlev\'e II equation
\begin{equation}\label{disc-eq}
\Delta^2 u_{x-1} = a u_{x}^3 + bxu_{x} + \frac c{u_x^3}, \qquad x=1,\ldots, N-1
    \end{equation}
under the Dirichlet condition, with
\begin{equation}\label{dir}
u_0=D_0,\qquad u_N=D_N,    
\end{equation}
$D_0,D_N>0$, or the nonlinear Robin condition, with 
\begin{equation}\label{disc-robin}
\Delta u_0 = f_0(u_0), \qquad \Delta u_{N-1} = f_N( u_{N}).
    \end{equation}
It is assumed that 
    $f_0, f_N:(0,+\infty)\to\R$ are given continuous functions and 
    positive solutions are sought, in turn, of (\ref{disc-eq})-(\ref{dir}) and (\ref{disc-eq})-(\ref{disc-robin}). 
    It is emphasised that the type of methods for the continuous Ermakov-Painlev\'e II model applied in  previous work \cite{pacr15} 
    cannot, in general, be extended to its discretised versions. This is notably 
    so for the repulsive case $c>0$, 
    for which accordingly an alternative  method is developed.

\section {Preliminaries}

In this section, we summarize some useful facts concerning second order discrete equations
that shall be employed throughout the paper. 
Some short proofs are included for the reader's convenience.  

As usual, we may identify the set of all the functions $u:\{0, \ldots, N\} \to \R$ with $\R^{N+1}$ and 
denote $u(x):=u_x$. The first and second order discrete derivatives of $u$ shall be denoted respectively by
$$\Delta u_{x} := u_{x+1} - u_{x}\qquad x=0,\ldots, N-1,
$$
$$\Delta^2 u_{x-1} := \Delta u_{x} - \Delta u_{x-1} = u_{x+1} - 2u_{x} + u_{x-1} \qquad x=1,\ldots, N-1.
$$
It is noticed that the discretisation given by (\ref{disc-eq}) follows the standard procedure of a finite difference scheme  for the continuous Ermakov-Painlev\'e II equation studied in \cite{pacr15}, 
\begin{equation}\label{eq-cont}
    y''(z)= Ay(z)^3 + Bzy(z)  + \frac C{y(z)^3},\qquad z\in (0,1).
\end{equation}
Indeed, letting $u_x:=y\left(\frac xN\right)$ for $x=0,\ldots, N$ and taking into account that $y''\left(\frac xN\right)\simeq \frac 1{N^2} \Delta^2 u_{x-1}$, the discrete equation (\ref{disc-eq}) is obtained, with $a=\frac A{N^2}$, $b=\frac B{N^3}$ and $c =\frac C{N^2}$.

Although $u$, $\Delta u$ and $\Delta^2u$ belong to different spaces, we shall always employ the notation $\|\cdot\| _{2}$ to refer to the Euclidean norm, namely
$$\|u\|_2 = \left(\sum_{x=0}^N u_x^2\right)^{1/2},\qquad 
\|\Delta u\|_2 = \left(\sum_{x=0}^{N-1} (\Delta u_x)^2\right)^{1/2}$$
and $$
\|\Delta^2 u\|_2 = \left(\sum_{x=0}^{N-2} (\Delta^2 u_x)^2\right)^{1/2},
$$

\begin{lem}\label{parts}
    Assume that $u_0=u_N=0$ or $\Delta u_0 = \Delta u_{N-1}=0$, then    
    $$\|\Delta u\|_2^2 \le   \|u\|_2 \|\Delta^2 u\|_2.$$
\end{lem}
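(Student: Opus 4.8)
The plan is to use summation by parts, which is the discrete analogue of integration by parts. Writing $S := \sum_{x=1}^{N-1} \Delta^2 u_{x-1}\, u_x$, I would expand $\Delta^2 u_{x-1} = \Delta u_x - \Delta u_{x-1}$ and rearrange the telescoping sum so that it becomes $-\sum_{x=0}^{N-1} (\Delta u_x)^2$ plus boundary terms; explicitly, $\sum_{x=1}^{N-1}(\Delta u_x - \Delta u_{x-1})u_x = \left[\Delta u_{x-1} u_x\right]$ evaluated at the endpoints minus $\sum (\Delta u_{x-1})(u_x - u_{x-1})$, and the latter sum is exactly $\|\Delta u\|_2^2$ after reindexing. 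The boundary contribution that appears is (up to sign) $\Delta u_{N-1} u_N - \Delta u_0 u_0$, and this is where the hypothesis enters: if $u_0 = u_N = 0$ the boundary term vanishes because of the $u$ factors, while if $\Delta u_0 = \Delta u_{N-1} = 0$ it vanishes because of the $\Delta u$ factors. Either way one obtains the clean identity
$$\|\Delta u\|_2^2 = -\sum_{x=1}^{N-1} \Delta^2 u_{x-1}\, u_x.$$

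With this identity in hand, the remaining step is a single application of the Cauchy--Schwarz inequality to the right-hand side, viewing it as an inner product of the vector $(\Delta^2 u_{x-1})_{x=1}^{N-1}$ with $(u_x)_{x=1}^{N-1}$:
$$\left| \sum_{x=1}^{N-1} \Delta^2 u_{x-1}\, u_x \right| \le \left( \sum_{x=1}^{N-1} (\Delta^2 u_{x-1})^2 \right)^{1/2} \left( \sum_{x=1}^{N-1} u_x^2 \right)^{1/2} \le \|\Delta^2 u\|_2 \, \|u\|_2,$$
where in the last step one uses that the truncated sums are bounded by the full Euclidean norms $\|\Delta^2 u\|_2$ and $\|u\|_2$ as defined above (the index ranges match after the trivial shift $x \mapsto x-1$ in the second-difference sum). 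Combining the displayed identity with this bound gives $\|\Delta u\|_2^2 \le \|u\|_2 \|\Delta^2 u\|_2$, as claimed.

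There is no real obstacle here; the only point requiring a little care is bookkeeping of the index ranges in the summation by parts, and in particular checking that the discrete boundary term is precisely $\Delta u_{N-1} u_N - \Delta u_0 u_0$ so that each of the two alternative hypotheses kills it. One should also note that the inequality is trivially true (both sides zero, or the right-hand side nonnegative) in degenerate cases, so no positivity or nonvanishing assumption on $u$ is needed.
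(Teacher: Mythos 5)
Your proof is correct and follows essentially the same route as the paper's: summation by parts to obtain $\|\Delta u\|_2^2 = -\sum_{x=1}^{N-1}\Delta^2 u_{x-1}\,u_x$ under either boundary hypothesis, followed by a single application of Cauchy--Schwarz. The only cosmetic difference is that you write the boundary contribution as $\Delta u_{N-1}u_N-\Delta u_0 u_0$ (absorbing the end terms $(\Delta u_0)^2$ and $(\Delta u_{N-1})^2$ into the full sum $\sum_{x=0}^{N-1}(\Delta u_x)^2$), whereas the paper keeps it as $\Delta u_{N-1}u_{N-1}-\Delta u_0 u_1$; the two identities are equivalent and both boundary forms vanish under either hypothesis.
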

\begin{proof}
    The summation by parts formula 
yields 
    $$\sum_{x=1}^{N-1} \Delta^2u_{x-1}u_x = \Delta u_{N-1} u_{N-1} - \Delta u_0 u_1 + 
\sum_{x=2}^{N-1}\Delta u_{x-1}(u_{x-1}-u_x)$$
$$= \Delta u_{N-1} u_{N-1} - \Delta u_0 u_1 - \sum_{x=1}^{N-2}(\Delta u_{x})^2
$$
Under any of the two assumptions, it is immediately seen that 
$$\Delta u_{N-1} u_{N-1} - \Delta u_0 u_1= -(\Delta u_{N-1})^2 - (\Delta u_{0})^2,$$
so the result follows from the Cauchy-Schwarz inequality. 
\end{proof}

\begin{lem}\label{eigen}
    Assume that $u_0=u_N=0$ and let $\lambda_1:= 4\sin^2\left(\frac \pi{2N}\right)$, then
    \begin{enumerate}
        \item $ {\lambda_1} \|u\|^2_2\le \|\Delta u\|^2_2.$
    \item       $\|u\|_2\le  \frac 1{\lambda_1}\|\Delta^2 u\|_2.$ 

    \end{enumerate}
    
    \end{lem}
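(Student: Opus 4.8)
The plan is to prove the two inequalities of Lemma \ref{eigen} by diagonalising the discrete Laplacian on the space of functions vanishing at the endpoints.

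First I would recall that the linear operator $L$ defined by $(Lu)_x = -\Delta^2 u_{x-1}$ for $x=1,\dots,N-1$, acting on the $(N-1)$-dimensional space $V = \{u\in\R^{N+1} : u_0 = u_N = 0\}$, is symmetric and positive definite with respect to the inner product $\langle u,v\rangle = \sum_{x=1}^{N-1} u_x v_x$. Its eigenvalues and eigenvectors are classical: setting $u^{(k)}_x = \sin\!\left(\frac{k\pi x}{N}\right)$ for $k=1,\dots,N-1$, a direct trigonometric computation (using $\sin(\theta+\phi)+\sin(\theta-\phi) = 2\sin\theta\cos\phi$) gives
\[
-\Delta^2 u^{(k)}_{x-1} = 2\Bigl(1-\cos\tfrac{k\pi}{N}\Bigr) u^{(k)}_x = 4\sin^2\!\Bigl(\tfrac{k\pi}{2N}\Bigr) u^{(k)}_x,
\]
so the eigenvalues are $\mu_k = 4\sin^2\!\left(\frac{k\pi}{2N}\right)$, which are strictly increasing in $k$ on $\{1,\dots,N-1\}$; hence the smallest is exactly $\mu_1 = \lambda_1 = 4\sin^2\!\left(\frac{\pi}{2N}\right)$. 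I would also note, via the summation-by-parts identity already proved in Lemma \ref{parts} (with $\Delta u_{N-1}u_{N-1} - \Delta u_0 u_1 = -(\Delta u_{N-1})^2 - (\Delta u_0)^2$, which vanishes when $u_0=u_N=0$ forces $u_1 = \Delta u_0$ and $u_{N-1} = -\Delta u_{N-1}$... actually more simply), that $\langle Lu, u\rangle = \|\Delta u\|_2^2$ for $u\in V$. Granting this, part (1) is the Rayleigh quotient bound $\lambda_1\|u\|_2^2 \le \langle Lu,u\rangle = \|\Delta u\|_2^2$, obtained by expanding $u$ in the eigenbasis $\{u^{(k)}\}$.

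For part (2) I would again expand $u = \sum_{k=1}^{N-1} c_k u^{(k)}$ in the orthogonal eigenbasis. Then $\Delta^2 u$ corresponds to $-\sum_k c_k \mu_k u^{(k)}$, so $\|\Delta^2 u\|_2^2 = \sum_k c_k^2 \mu_k^2 \|u^{(k)}\|_2^2 \ge \lambda_1^2 \sum_k c_k^2\|u^{(k)}\|_2^2 = \lambda_1^2\|u\|_2^2$, which on taking square roots is precisely $\|u\|_2 \le \frac{1}{\lambda_1}\|\Delta^2 u\|_2$. Alternatively, part (2) follows from combining part (1) with Lemma \ref{parts}: $\lambda_1\|u\|_2^2 \le \|\Delta u\|_2^2 \le \|u\|_2\|\Delta^2 u\|_2$, and dividing by $\|u\|_2$ (the case $u\equiv 0$ being trivial) gives $\lambda_1\|u\|_2 \le \|\Delta^2 u\|_2$. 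This second route is shorter and avoids re-deriving the spectrum, so I would likely present part (2) that way and only invoke the eigenvector computation for part (1).

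The main obstacle — really the only non-routine point — is establishing that $\lambda_1 = 4\sin^2\!\left(\frac{\pi}{2N}\right)$ is genuinely the least eigenvalue, i.e. verifying both that the $u^{(k)}$ are eigenvectors (a short induction or product-to-sum identity) and that they span $V$ (they are $N-1$ mutually orthogonal nonzero vectors in an $(N-1)$-dimensional space). Everything else is a one-line Rayleigh-quotient or Cauchy–Schwarz argument. I expect the authors may well take the shortcut for part (2) and give the eigenvalue identification only as far as needed for part (1), possibly citing it as standard.
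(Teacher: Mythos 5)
Your proposal is correct and follows essentially the same route as the paper: part (1) is the Rayleigh-quotient bound coming from $\lambda_1$ being the first eigenvalue of $-\Delta^2 u_{x-1}=\lambda u_x$ with $u_0=u_N=0$ (the paper simply cites this as known, whereas you verify it via the sine eigenvectors), and part (2) is obtained, exactly as in the paper, by combining part (1) with Lemma \ref{parts} and dividing by $\|u\|_2$. The only difference is that you supply the standard spectral computation that the authors leave implicit.
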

\begin{proof} The first inequality follows from the fact that $\lambda_1$ is the first eigenvalue of the problem 
$$-\Delta^2u_{x-1} =\lambda u_x\qquad u_0=u_N=0,$$
while the second one is deduced from the first one, combined with the preceding lemma. 
    \end{proof}
     \begin{lem}
         Let $g_x:\R\to \R$ be continuous and bounded for $x=1,\ldots, N-1$. 
         Then  the 
         equation 
         $$\Delta^2u_{x-1} - u_x = g_{x}(u_x) \qquad  x=1,\ldots, N-1$$
         has at least one solution, both for 
          arbitrary 
         Dirichlet conditions (\ref{dir}) and  Robin conditions 
         (\ref{disc-robin}) with  $f_0, f_N:\R\to \R$ continuous and bounded.  
     
     \end{lem}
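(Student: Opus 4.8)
The plan is to recast each of the two boundary value problems as a fixed point equation $u = T(u)$ for a continuous self-map of a closed ball in a finite-dimensional space, and then to apply Brouwer's fixed point theorem; the boundedness of $g$ --- and of $f_0,f_N$ in the Robin case --- is precisely what makes some ball invariant.

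First I would verify that the associated linear problem is uniquely solvable. In the Dirichlet setting, if $-\Delta^2 u_{x-1}+u_x=0$ for $x=1,\dots,N-1$ with $u_0=u_N=0$, then pairing with $u_x$, summing over $x$, and invoking the summation-by-parts identity from the proof of Lemma~\ref{parts} gives $\|u\|_2^2+\|\Delta u\|_2^2=0$, hence $u\equiv 0$; thus $L\colon u\mapsto(-\Delta^2 u_{x-1}+u_x)_{x=1}^{N-1}$ is an isomorphism of $\{u\in\R^{N+1}: u_0=u_N=0\}\cong\R^{N-1}$ (equivalently, $1+\lambda_1>0$, with $\lambda_1$ as in Lemma~\ref{eigen}). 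In the Robin setting, take $L\colon\R^{N+1}\to\R^{N+1}$ to have as its rows $\Delta u_0$, then $-\Delta^2 u_{x-1}+u_x$ for $x=1,\dots,N-1$, and finally $\Delta u_{N-1}$; if $Lu=0$ then $\Delta u_0=\Delta u_{N-1}=0$ and $\Delta^2 u_{x-1}=u_x$ on the interior, so the same identity yields $\sum_{x=1}^{N-1}u_x^2=-\sum_{x=1}^{N-2}(\Delta u_x)^2\le 0$, forcing $u_x=0$ for $1\le x\le N-1$ and then $u_0=u_1=0$, $u_N=u_{N-1}=0$. Hence $L$ is again invertible.

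Next I would assemble the fixed point map. In the Dirichlet case I would first homogenise the data via $u=w+\ell$, where $\ell$ is the affine interpolant with $\ell_0=D_0$, $\ell_N=D_N$ (so $\Delta^2\ell\equiv 0$); this turns the equation into $-\Delta^2 w_{x-1}+w_x=\widetilde g_x(w_x)$ with $w_0=w_N=0$, where $\widetilde g_x(s):=-g_x(s+\ell_x)-\ell_x$ remains continuous and bounded. In both cases the problem then reads $u=L^{-1}G(u)=:T(u)$, where $G(u)$ is the data vector built from the values $-g_x(u_x)$ (together with $f_0(u_0)$ and $f_N(u_N)$ in the two boundary slots, for the Robin problem). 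Since $G$ is continuous and bounded, $T\colon\R^m\to\R^m$ (with $m=N-1$ or $m=N+1$ according to the case) is continuous with bounded range, say $T(\R^m)\subset\overline{B_R}$; then $T(\overline{B_R})\subset\overline{B_R}$, and Brouwer's fixed point theorem produces a fixed point of $T$, which is the desired solution.

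The one step that is not entirely routine is the invertibility of $L$, i.e.\ that the corresponding homogeneous problem admits only the trivial solution, and this is exactly where the strict positivity of the discrete quadratic form $\|u\|_2^2+\|\Delta u\|_2^2$ --- extracted through summation by parts as in Lemma~\ref{parts} --- is used. Everything else (continuity of $T$, the existence of the invariant ball, the appeal to Brouwer) is standard in this finite-dimensional context.
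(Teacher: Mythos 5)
Your argument is essentially the paper's own proof: both recast the problem as a fixed point equation $u=T(u)$, where $T(v)$ solves the linear problem $\Delta^2 u_{x-1}-u_x=g_x(v_x)$ with the corresponding (frozen) boundary data, observe that boundedness of $g_x$ (and of $f_0,f_N$) makes the range of $T$ bounded so that some closed ball is invariant, and conclude via Brouwer's fixed point theorem. The only difference is that you verify the invertibility of the linear operator explicitly via the summation-by-parts identity (showing the homogeneous problem has only the trivial solution), whereas the paper simply asserts that the associated tridiagonal matrix is invertible; this is a welcome extra detail rather than a different route.
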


\begin{proof}
For each $v\in \R^{N+1}$ and both conditions (\ref{dir}) and (\ref{disc-robin}),
define $T(v):=u$ as the unique solution of the linear problem
$$Lu_x:=\Delta^2u_{x-1} - u_x = g_{x}(v_x) \qquad  x=1,\ldots, N-1
$$
satisfying (\ref{dir}) or 
$$\Delta u_0 = f_0(v_0), \qquad \Delta u_{N-1} = f_N( v_{N})
$$
respectively. It is verified that $T:\R^{N-1}\to \R^{N-1}$ 
is well defined and continuous. In fact, in both cases the problem may be written in matricial form as 
$Au=B$, where $A$ is a (constant) invertible tridiagonal matrix and $B$ is a bounded vector depending 
continuously on $v$. This is  readily applied to prove that the range of the mapping $T$ is bounded, so by Brouwer's fixed point theorem the existence of at least one fixed point is deduced. 
\end{proof}

The preceding lemma allows 
to adapt to the present context the method of upper and lower solutions for a general equation 
\begin{equation}
    \label{disc-eq-gral} \Delta^2 u_{x-1} = G_x(u_x)\qquad x=1,\ldots, N-1 
\end{equation}
with $G_x$ continuous for $x=1,\ldots, N-1$.
We recall that the method was successfully extended to discrete second order problems under various boundary conditions, see e. g. \cite{AOR, AW, CO, FORP}. However, to the best of our knowledge, no results can be found in the literature for the nonlinear Robin condition (\ref{disc-robin}). For convenience, we shall assume without loss of generality 
that the continuous functions $f_0$ and $f_N$, as well as $G_x$ for $x=1,\ldots, N-1$  are 
defined on the whole line. 

\begin{Def}
We shall say that $\beta$ is an upper solution for (\ref{disc-eq-gral})-(\ref{dir}) or 
(\ref{disc-eq-gral})-(\ref{disc-robin}) if 
$$\Delta^2 \beta_{x-1} \le G_x(\beta_x)\qquad x=1,\ldots, N-1
$$
and
$$\beta_0\ge D_0, \quad \beta_N\ge D_N
$$
or 
$$\Delta \beta_0 \le f_0(\beta_0),\quad \Delta \beta_{N-1} \ge f_N(\beta_N)
$$
respectively. A lower solution $\alpha$ is defined analogously, with all the inequalities reversed. 

\end{Def}

\begin{lem} \label{sub-sup} Assume there exist $\alpha$ and $\beta$ as before such that $\alpha_x\le \beta_x$ for all $x=0,\ldots, N$. 
Then the respective problem  
    (\ref{disc-eq-gral})-(\ref{dir}) or 
(\ref{disc-eq-gral})-(\ref{disc-robin})
has at least one solution $u$ such that $\alpha_x\le u_x\le \beta_x$ for all 
 $x=0,\ldots, N$. 
 \end{lem}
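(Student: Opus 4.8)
The plan is to reduce the problem to a modified equation whose nonlinearity is bounded and to which the previous lemma applies, and then to show that any solution of the modified problem automatically lies between $\alpha$ and $\beta$, hence solves the original problem. First I would introduce the truncation operator $P_x(s) := \max\{\alpha_x, \min\{s, \beta_x\}\}$, which projects $s$ onto the interval $[\alpha_x, \beta_x]$, and define the modified right-hand side
\begin{equation*}
\widetilde{G}_x(s) := G_x(P_x(s)) + s - P_x(s), \qquad x = 1, \ldots, N-1 .
\end{equation*}
Each $\widetilde{G}_x$ is continuous and bounded (the term $G_x \circ P_x$ is bounded because $G_x$ is continuous on the compact interval $[\alpha_x,\beta_x]$, and $s - P_x(s)$ is bounded once we also truncate, or is controlled by the quadratic-vs-linear comparison below). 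For the Robin case I would likewise replace $f_0, f_N$ by the compositions $\widetilde{f}_0(s) := f_0(P_0(s))$, $\widetilde{f}_N(s) := f_N(P_N(s))$, which are continuous and bounded. Writing the modified equation as $\Delta^2 u_{x-1} - u_x = \widetilde{G}_x(u_x) - u_x =: g_x(u_x)$ with $g_x$ continuous and bounded, the previous lemma yields a solution $u$ of the modified problem under either set of boundary conditions.

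The heart of the argument is to verify that such a solution $u$ satisfies $\alpha_x \le u_x \le \beta_x$ for all $x$; I will do the upper inequality $u_x \le \beta_x$, the lower one being symmetric. Set $w_x := u_x - \beta_x$ and suppose, for contradiction, that $w$ attains a positive maximum at some index $x^*$. If $x^* \in \{1,\ldots,N-1\}$ is interior, then $\Delta^2 w_{x^*-1} = w_{x^*+1} - 2w_{x^*} + w_{x^*-1} \le 0$ by maximality, whereas
\begin{equation*}
\Delta^2 w_{x^*-1} = \Delta^2 u_{x^*-1} - \Delta^2 \beta_{x^*-1} \ge \widetilde{G}_{x^*}(u_{x^*}) - G_{x^*}(\beta_{x^*}) ;
\end{equation*}
since $u_{x^*} > \beta_{x^*}$ we have $P_{x^*}(u_{x^*}) = \beta_{x^*}$, so the right side equals $\big(G_{x^*}(\beta_{x^*}) + u_{x^*} - \beta_{x^*}\big) - G_{x^*}(\beta_{x^*}) = u_{x^*} - \beta_{x^*} = w_{x^*} > 0$, a contradiction. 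The Dirichlet boundary case $x^* \in \{0,N\}$ is ruled out directly since $w_0 = D_0 - \beta_0 \le 0$ and $w_N = D_N - \beta_N \le 0$. For the Robin case one must instead exclude $x^* \in \{0, N\}$ using the boundary inequalities: at $x^* = 0$, maximality gives $\Delta w_0 = w_1 - w_0 \le 0$, while $\Delta w_0 = \Delta u_0 - \Delta \beta_0 \ge \widetilde{f}_0(u_0) - f_0(\beta_0) = f_0(\beta_0) - f_0(\beta_0) = 0$ (using $u_0 > \beta_0 \Rightarrow P_0(u_0) = \beta_0$); if $w_1 = w_0$ the maximum propagates to an interior point and the previous step applies, and symmetrically at $x^* = N$ using $\Delta \beta_{N-1} \ge f_N(\beta_N)$.

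I expect the main obstacle to be the bookkeeping in the Robin boundary analysis: unlike the Dirichlet case, a maximum of $w$ can occur at $x=0$ or $x=N$, and one must argue carefully, via a discrete Hopf-type propagation, that a boundary maximum forces equality at the adjacent node and hence reduces to the interior case — while also handling the possibility that $w$ is constant and equal to its positive maximum on a whole block of consecutive indices reaching into the interior. A minor technical point worth stating explicitly is that the unbounded term $u_x - P_x(u_x)$ in $\widetilde G_x$ does not spoil the application of the previous lemma: one checks that any solution of the modified problem is a priori bounded (e.g. the maximum principle argument above, applied first with generous constant super/subsolutions, or a direct energy estimate using Lemma \ref{eigen}), so $\widetilde G_x$ may be further truncated outside a large ball without affecting the solution set, making it genuinely bounded as the previous lemma requires. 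Once $\alpha_x \le u_x \le \beta_x$ is established, $P_x(u_x) = u_x$ and the extra terms vanish, so $u$ solves \eqref{disc-eq-gral} together with the prescribed Dirichlet or Robin conditions, completing the proof. $\Box$
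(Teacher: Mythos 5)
Your proposal is correct and follows essentially the same route as the paper: the same truncation operator, the same modified problem $\Delta^2 u_{x-1}-u_x=G_x(P_x(u_x))-P_x(u_x)$ solved via the preceding fixed-point lemma, and the same discrete maximum-principle comparison with $\alpha$ and $\beta$, including the propagation-to-the-interior argument for a boundary maximum in the Robin case. The only superfluous point is your closing worry about boundedness: since $\widetilde G_x(s)-s=G_x(P_x(s))-P_x(s)$ takes values in a compact set, the right-hand side of the modified problem is already bounded and no further truncation or a priori estimate is needed.
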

\begin{proof}
    Define as usual the truncation function
    $$\mathcal T_x(u)=\max \{\min\{u, \beta_x\}, \alpha_x\} $$
    and, 
    using the previous lemma, 
    set $u$ as a solution of the problem
    $$\Delta^2 u_{x-1} - u_x = G_x(\mathcal T_x(u_x))-\mathcal T_x(u_x)\qquad x=1,\ldots, N-1
    $$
    under the condition (\ref{dir}) or
    $$\Delta u_0=f_0(\mathcal T_0(u_0)), \quad \Delta u_{N-1}=f_{N}(\mathcal T_N(u_{N}))$$
respectively. Suppose for example that 
$u-\beta$ achieves a maximum at some 
$x =1,\ldots, N-1$, then $\Delta (u-\beta)_{x-1}\le 0$, If $u_x>\beta_x$, then 
$$\Delta u_{x-1} > G_x(\beta_x) \ge \Delta \beta_{x-1}, 
$$
a contradiction. In the same way, it is verified that $u-\alpha$ cannot achieve a negative minimum at $x=1,\ldots, N-1$. In the Dirichlet case, it is also clear that $\alpha_x\le u_x\le \beta_x$ for $x=0$ and $x=N$; thus, $u$ lies between $\alpha$ and $\beta$ for all $x$ and solves the original problem. For the Robin conditions,  suppose that $u-\beta$ does not achieve a maximum at $x=1,\ldots, N-1$, then a strict absolute maximum is achieved either at $x=0$ or $x=N$. In the first case, if moreover 
$u_0 >\beta_0$, then $u_0-\beta_0 > u_1-\beta_1$, that is
$$\Delta \beta_0 \le 
f(\beta_0) =   \Delta u_0 < \Delta \beta_0,
$$
a contradiction. The proof is analogous for 
$x=N$, so we are able to conclude that $\beta\ge u$. Similarly,  it is 
verified that $\alpha\le u$, whence $u$ is a solution of the original problem.  
 \end{proof}
\section{Attractive case}

Throughout this section, we shall assume that the singularity is attractive, that is, $c<0$. 
Applying the lemmas of the previous section, we shall establish our results. 

\begin{thm}\label{attract-1}
   Let $a> 0 >c$. Then (\ref{disc-eq})-(\ref{dir}) admits at least one positive solution for arbitrary $D_0,D_N>0$ and 
   (\ref{disc-eq})-(\ref{disc-robin}) admits at least one positive solution, provided that 
   $$f_0(\alpha), f_N(\beta) \le 0\le f_N(\alpha), f_0(\beta)$$      
for some $\alpha >0$ small enough and some $\beta >\alpha$ large enough. 
   Furthermore, let $$M:= -\frac 9{N-1}\left(\frac{a^2c}4\right)^{1/3}  > 0.$$  
Then there are no other positive solutions  if
   \begin{equation}\label{uniq}
   b + M > - \frac 4{N-1} \sin^2\left(\frac \pi{2N}\right)  
   \end{equation}
      for the Dirichlet condition
and
\begin{equation}\label{uniq-rob}
b + M > 0    
\end{equation}
in the Robin case, provided that $f_0$ is nondecreasing and $f_N$ is nonincreasing.  
\end{thm}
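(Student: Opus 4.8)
The plan is to produce ordered lower and upper solutions $\alpha^*\le\beta^*$ for (\ref{disc-eq})-(\ref{dir}) and (\ref{disc-eq})-(\ref{disc-robin}), and then invoke Lemma \ref{sub-sup}. Since $a>0>c$, the nonlinearity $g(s):=as^3+bxs+c/s^3$ (for $s>0$) tends to $+\infty$ as $s\to 0^+$ and to $+\infty$ as $s\to+\infty$, but near small $s$ the dominant term is the singular $c/s^3<0$, which is very negative. So I would take the constant lower solution $\alpha^*\equiv\alpha$ with $\alpha>0$ small: then $\Delta^2\alpha^*_{x-1}=0$, and we need $0\ge a\alpha^3+bx\alpha+c/\alpha^3$, i.e. $a\alpha^3+bx\alpha+c/\alpha^3\le 0$; this holds for all $x=1,\dots,N-1$ once $\alpha$ is small enough, because $c/\alpha^3\to-\infty$ while $a\alpha^3+bx\alpha\to 0$. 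For the upper solution take the constant $\beta^*\equiv\beta$ with $\beta$ large: $\Delta^2\beta^*_{x-1}=0\le a\beta^3+bx\beta+c/\beta^3$, which holds once $\beta$ is large since $a\beta^3$ dominates and $a>0$. Choosing $\alpha$ small enough that $\alpha\le\min\{D_0,D_N\}$ (Dirichlet) and $\beta$ large enough that $\beta\ge\max\{D_0,D_N\}$ gives the boundary inequalities in the Dirichlet case immediately. In the Robin case, $\Delta\alpha^*_0=\Delta\alpha^*_{N-1}=0$, so the lower-solution boundary conditions read $0\ge f_0(\alpha)$ and $0\le f_N(\alpha)$, and the upper-solution conditions read $0\le f_0(\beta)$ and $0\ge f_N(\beta)$ — these are precisely the hypotheses $f_0(\alpha),f_N(\beta)\le 0\le f_N(\alpha),f_0(\beta)$. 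Then Lemma \ref{sub-sup} yields a solution $u$ with $0<\alpha\le u_x\le\beta$, hence positive.

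**Uniqueness part.** Suppose $u$ and $v$ are two positive solutions and set $w:=u-v$. Subtracting the equations, $\Delta^2 w_{x-1}=g_x(u_x)-g_x(v_x)$ where $g_x(s)=as^3+bxs+c/s^3$. By the mean value theorem, $g_x(u_x)-g_x(v_x)=g_x'(\xi_x)w_x$ for some $\xi_x$ between $u_x$ and $v_x$, with $g_x'(s)=3as^2+bx-3c/s^4$. The key point is that since $a>0$ and $c<0$, $3as^2-3c/s^4=3as^2+3|c|/s^4\ge 3\cdot(2)\cdot\sqrt{a|c|}\cdot$ (AM-GM on the two positive terms appropriately) — more precisely $3as^2+\frac{3|c|}{s^4}=3as^2+\frac{3|c|}{2s^4}+\frac{3|c|}{2s^4}\ge 3\cdot 3\left(a\cdot\frac{|c|}{2}\cdot\frac{|c|}{2}\right)^{1/3}=9\left(\frac{a|c|^2}{4}\right)^{1/3}$, wait — I would instead bound $3as^2+3|c|/s^4$ from below by minimizing over $s>0$; the minimum of $h(s)=3as^2+3|c|/s^4$ is attained at $s^6=2|c|/a$ and equals $9\left(\frac{a^2|c|}{4}\right)^{1/3}=9\left(-\frac{a^2c}{4}\right)^{1/3}$, so $g_x'(\xi_x)\ge bx+9\left(-\frac{a^2c}{4}\right)^{1/3}\ge b+9\left(-\frac{a^2c}{4}\right)^{1/3}=:b+(N-1)M/(-1)$… I need to reconcile the factor: indeed $M=-\frac{9}{N-1}\left(\frac{a^2c}{4}\right)^{1/3}$, so $9\left(-\frac{a^2c}{4}\right)^{1/3}=(N-1)M$ only if $N-1=1$; instead what matters is $g_x'(\xi_x)\ge b+9\left(-\frac{a^2c}{4}\right)^{1/3}$. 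Then $\Delta^2 w_{x-1}=q_x w_x$ with $q_x:=g_x'(\xi_x)\ge b+9(-a^2c/4)^{1/3}$ for each $x$, while $w$ satisfies homogeneous boundary data: $w_0=w_N=0$ (Dirichlet) or $\Delta w_0=f_0(u_0)-f_0(v_0)$, $\Delta w_{N-1}=f_N(u_N)-f_N(v_N)$ (Robin). Multiply $\Delta^2 w_{x-1}=q_x w_x$ by $w_x$, sum over $x=1,\dots,N-1$, and apply the summation-by-parts computation from the proof of Lemma \ref{parts}: $\sum_{x=1}^{N-1}\Delta^2 w_{x-1}w_x=\Delta w_{N-1}w_{N-1}-\Delta w_0 w_1-\sum_{x=1}^{N-2}(\Delta w_x)^2$. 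Thus $\sum_x q_x w_x^2=-\|\Delta w\|_2^2+(\text{boundary terms})$. In the Dirichlet case $w_0=w_N=0$ so the boundary terms vanish and $\|\Delta w\|_2^2\ge\lambda_1\|w\|_2^2$ with $\lambda_1=4\sin^2(\pi/2N)$ by Lemma \ref{eigen}; hence $\sum_x q_x w_x^2\le-\lambda_1\|w\|_2^2$, i.e. $\sum_x(q_x+\lambda_1)w_x^2\le 0$. But condition (\ref{uniq}) rearranges to $b+9(-a^2c/4)^{1/3}+\frac{4}{N-1}\sin^2(\pi/2N)>0$; I would need to check the exact normalization of the $\lambda_1/(N-1)$ factor against the paper's constant $M$, but granting it, $q_x+\lambda_1\ge b+(N-1)M+\lambda_1>0$ (after matching constants), forcing $w\equiv 0$. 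In the Robin case, monotonicity of $f_0$ (nondecreasing) and $f_N$ (nonincreasing) makes $\Delta w_{N-1}w_{N-1}-\Delta w_0 w_1$ have the right sign: if $w$ has a positive maximum at $x=0$ or negative minimum at $x=N$ etc., the argument as in Lemma \ref{sub-sup} shows the boundary terms are $\le 0$ (one should write $\Delta w_0 = f_0(u_0)-f_0(v_0)$ has the same sign as $w_0=u_0-v_0$, and similarly $\Delta w_{N-1}$ has the opposite sign to $w_N$), so $\sum_x q_x w_x^2\le 0$ directly, and (\ref{uniq-rob}) $b+M>0$ — again modulo the constant — gives $q_x>0$, hence $w\equiv 0$.

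**Main obstacle.** The delicate point is the uniqueness argument, specifically getting the correct lower bound on $g_x'(\xi_x)$ uniformly in $x$ and in the (a priori unknown) location of $\xi_x\in(\min\{u_x,v_x\},\max\{u_x,v_x\})$ — this is where the explicit minimization $\min_{s>0}(3as^2+3|c|/s^4)=9(-a^2c/4)^{1/3}$ and the definition of $M$ enter, and one must make sure the constant $M$ in the statement is exactly calibrated so that $b+(N-1)M$ (or whatever the precise combination is) plus the eigenvalue/boundary contribution is nonnegative. The Robin boundary-term bookkeeping — turning the monotonicity of $f_0,f_N$ into the correct sign of $\Delta w_{N-1}w_{N-1}-\Delta w_0 w_1$ — is the second point requiring care, but it parallels the maximum-principle reasoning already used in the proof of Lemma \ref{sub-sup}, so it should go through cleanly. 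The existence half is routine given the lemmas: the only thing to watch is choosing the single small $\alpha$ that works simultaneously as a lower-solution height and satisfies $\alpha\le\min\{D_0,D_N\}$, and likewise the single large $\beta$.
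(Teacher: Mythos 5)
Your proposal follows essentially the same route as the paper: constant lower/upper solutions fed into Lemma \ref{sub-sup} for existence, and a summation-by-parts energy estimate with the sharp lower bound $\min_{s>0}\left(3as^2-3c/s^4\right)=9\left(-a^2c/4\right)^{1/3}$ for uniqueness (the paper bounds the divided difference $[G_x(u)-G_x(v)]/(u-v)$ directly rather than via the mean value theorem, but minimizing over pairs $u,v>0$ gives the same constant). The calibration you left open does work out for every $N$: since $c<0$ the real cube root satisfies $(a^2c/4)^{1/3}=-(-a^2c/4)^{1/3}$, so $M=\frac 9{N-1}\left(-a^2c/4\right)^{1/3}$ and $(N-1)M=9\left(-a^2c/4\right)^{1/3}$ identically, which is exactly what your bound $q_x\ge bx+(N-1)M$ requires; the worst case $x=N-1$ (when $b<0$) then reproduces (\ref{uniq}) and (\ref{uniq-rob}) precisely.
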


\begin{proof}
    Define $G_x(t):=  a t^3 + bxt + \frac c{t^3}$, then $G(\alpha) < 0 < G(\beta)$ for   
    $\alpha>0$ sufficiently small and $\beta>0$ sufficiently large. 
    In particular, $\alpha$ and $\beta$ may be chosen in such a way that all the conditions of Lemma \ref{sub-sup} are satisfied, so the existence of at least one solution follows. 
Now assume that $u$ and $v$ are positive solutions and set $w:=u-v$, then a simple computation shows that 
$$\sum_{x=1}^{N-1} \Delta^2 w_{x-1}w_{x}  
= \sum_{x=1}^{N-1} [G_x(u_x)-G_x(v_x)](u_x-v_x)\ge (N-1)M\sum_{x=1}^{N-1} w_x^2 +  
\sum_{x=1}^{N-1}xbw_x^2,
$$
whence
$$-\|\Delta w\|_2^2 
+ w_N\Delta w_{N-1} - w_0\Delta w_0 \ge 
(N-1)M\sum_{x=1}^{N-1} w_x^2 +  
\sum_{x=1}^{N-1}xbw_x^2
$$
$$   \ge (N-1)(M-b^-)\sum_{x=1}^{N-1} w_x^2,  
$$
where, as usual, $b^-=-\min\{ b, 0\}$. 

On the one hand, 
in the Dirichlet case, it is seen that $w_0=w_N=0$, so the previous inequality, combined with the assumption (\ref{uniq}) yields
$$-\|\Delta w\|_2^2 \ge (N-1)(M-b^-)\|w\|_2^2 > -\lambda_1\|w\|_2^2,$$
 and the proof follows from Lemma \ref{eigen}.

On the other hand, under the Robin 
condition, the monotonicity assumptions on $f_0$ and $f_N$ imply 
$$w_N\Delta w_{N-1} \le 0 \le w_0\Delta w_0;
$$
thus, the assumption  $M-b^->0$ directly implies 
that 
$$\|\Delta w\|^2 = \sum_{x=1}^{N-1} w_x^2=0.$$
We conclude that $w$ is constant and $w_x=0$ for $x=1,\ldots, N-1$ which, in turn, implies $w_0=w_N=0$. 
    \end{proof}

 \begin{rem}
     It is of interest to compare the previous result with the corresponding one for the continuous equation (\ref{eq-cont}); particularly, the uniqueness conditions (\ref{uniq}) and (\ref{uniq-rob}) which, letting $N\to \infty$, yield
     $$B - 9\left(\frac{A^2C}4\right)^{1/3} \ge -\pi^2
     $$
    and
    $$B\ge 0$$
    respectively. The latter two inequalities can be easily obtained from (\ref{eq-cont}) by direct computation and, in the Dirichlet case,  the condition given in \cite{pacr15} is thus improved.
     
 \end{rem}

Next, we shall consider the case $a\le 0$. In the previous paper \cite{pacr15}, a somewhat sharp sufficient condition was obtained by considering the solutions of the autonomous problem 
$$u''(x)= au(x)^3 - b^-u(x) + \frac c{u(x)^3}, \quad 0<x<1  
$$
as upper solutions of the original continuous version of (\ref{disc-eq}). 
However, the bounds given in \cite{pacr15} strongly 
relied  on the fact that 
the autonomous equation can 
be integrated when multiplied by $u'(x)$,
 a procedure
that cannot be imitated here, due to the failure of the chain rule. 
In contrast with that, the discrete problem has an advantage with respect to the continuous one, for which a  constant upper solution $\beta>0$ does not exist  when $a\le 0$, since the inequality 
$$bx\beta\ge -\left(a\beta^3  + \frac c{\beta^3}
\right)
>0 $$ 
needs to be satisfied for all 
$x\in (0,1)$. 
The following result provides a sufficient condition for the existence of a constant upper 
solution for  (\ref{disc-eq}).

\begin{thm}\label{th2}
Assume $c<0<b$, then: 

\begin{enumerate}
    \item For $a=0$, the Dirichlet problem (\ref{disc-eq})-(\ref{dir}) 
    has a unique  positive solution. If furthermore 
               $$f_0(\alpha) \le 0\le f_N(\alpha),\qquad 
    f_0(\beta)\ge 0\ge f_N(\beta)
    $$
for some sufficiently small $\alpha>0$ and some sufficiently large $\beta>\alpha$, then  the nonlinear Robin problem (\ref{disc-eq})-(\ref{disc-robin}) has also a positive solution, which is unique when $f_0$ is nondecreasing and $f_N$ is nonincreasing.

    \item For $a<0$,  assume that
    \begin{equation}\label{beta-cond}
    4b^3 \ge -27ca^2,
    \end{equation}        
and 
    define 
\begin{equation}\label{beta}
    \beta(b) = \sqrt{\frac{-2b}{3a}}.
    \end{equation}
    Then (\ref{disc-eq}) has at least one positive solution satisfying (\ref{dir}) or (\ref{disc-robin}), provided that 
    $$\beta(b)\ge D_0, D_N
    $$
    and
    $$f_0(\alpha) \le 0\le f_N(\alpha),\qquad 
    f_0(\beta(b))\ge 0\ge f_N(\beta(b))
    $$
    for some sufficiently small $\alpha>0$,
    respectively. 
    \end{enumerate}
\end{thm}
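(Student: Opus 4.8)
The plan is to apply the method of upper and lower solutions (Lemma~\ref{sub-sup}) with a \emph{constant} upper solution, exactly as in the attractive-case analysis of Theorem~\ref{th2}(1) but now compensating for the negative cubic term $a<0$ by exploiting that $bx$ stays bounded on the finite grid $x=1,\ldots,N-1$. First I would set $G_x(t):=at^3+bxt+c/t^3$ and check that a positive constant $\beta$ is an upper solution provided $\Delta^2\beta_{x-1}=0\le G_x(\beta)$ for every $x$, i.e. provided $a\beta^3+bx\beta+c/\beta^3\ge 0$ for all such $x$. Since $b>0$ the left-hand side is increasing in $x$, so the binding constraint is at $x=1$; it actually suffices (and is cleaner) to require $a\beta^3+c/\beta^3\ge 0$ after absorbing the $b$-term, or to optimise the function $\varphi(\beta):=a\beta^3+b\beta+c/\beta^3$ over $\beta>0$. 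A short calculus computation shows $\varphi$ attains its maximum on $(0,\infty)$ at $\beta(b)=\sqrt{-2b/3a}$ (this is where $\varphi'$ has its relevant root, using $a<0$), and that $\varphi(\beta(b))\ge 0$ is equivalent precisely to the hypothesis $4b^3\ge-27ca^2$ in~(\ref{beta-cond}). Hence with this choice $\beta(b)$ is a genuine constant upper solution.

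Next I would produce the lower solution: taking $\alpha>0$ small, $G_x(\alpha)=a\alpha^3+bx\alpha+c/\alpha^3<0$ for all $x$ because the singular term $c/\alpha^3\to-\infty$ dominates as $\alpha\to 0^+$, so the constant $\alpha$ satisfies $\Delta^2\alpha_{x-1}=0\ge G_x(\alpha)$, i.e. $\alpha$ is a lower solution of the differential inequality. For the Dirichlet boundary data one also needs $\alpha\le D_0,D_N\le\beta(b)$, which holds for $\alpha$ small under the stated assumption $\beta(b)\ge D_0,D_N$; for the Robin data one imposes exactly the sign conditions $f_0(\alpha)\ge\Delta\alpha_0=0$, $f_N(\alpha)\le 0$ and $f_0(\beta(b))\le 0$, $f_N(\beta(b))\ge 0$ — wait, matching the Definition: a lower solution needs $\Delta\alpha_0\ge f_0(\alpha_0)$ and $\Delta\alpha_{N-1}\le f_N(\alpha_N)$, so with $\Delta\alpha\equiv 0$ this reads $f_0(\alpha)\le 0\le f_N(\alpha)$, and the upper solution $\beta(b)$ needs $f_0(\beta(b))\ge 0\ge f_N(\beta(b))$ — precisely the hypotheses listed in the theorem. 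Since $\alpha\le\beta(b)$ for $\alpha$ small, Lemma~\ref{sub-sup} applies and yields a solution $u$ with $\alpha\le u_x\le\beta(b)$, in particular positive.

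The step I expect to be the main obstacle is verifying the \emph{sharp} algebraic equivalence ``$\max_{\beta>0}\varphi(\beta)\ge 0\iff 4b^3\ge-27ca^2$'': one must locate the critical point of $\varphi(\beta)=a\beta^3+b\beta+c\beta^{-3}$ correctly (the equation $\varphi'(\beta)=3a\beta^2+b-3c\beta^{-4}=0$ is not immediately the quadratic one wants, and the clean answer $\beta(b)=\sqrt{-2b/3a}$ presumably comes from a slightly different grouping — most likely treating $a\beta^3+b\beta$ and the singular term separately, or from the fact that the relevant inequality is really on $a\beta^2+b+c\beta^{-4}$ at $x=1$ scaled appropriately), and then confirm the cubic discriminant condition drops out exactly as~(\ref{beta-cond}). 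Everything else — that a constant function has zero second difference, that Lemma~\ref{sub-sup} covers both boundary conditions, and the sign bookkeeping for $f_0,f_N$ — is routine. I would also remark, as the authors do in the preceding discussion, that this constant-upper-solution device has no analogue for the continuous equation~(\ref{eq-cont}) since there $bz\beta$ cannot be bounded away from the obstruction on all of $(0,1)$, which is exactly the advantage the discrete setting confers.
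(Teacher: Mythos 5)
Your overall strategy coincides with the paper's: a constant lower solution $\alpha$ (small) and the constant upper solution $\beta(b)$, the observation that $b>0$ makes $G_x\ge G_1$ on the grid so that only $G_1(\beta(b))\ge 0$ needs checking, and the sign bookkeeping for $f_0,f_N$ against the definition of upper and lower solutions under (\ref{disc-robin}) — all of this is exactly what the authors do, and you carry it out correctly. (Part 1 with $a=0$ and the uniqueness assertions are handled in the paper by reference to the argument of Theorem \ref{attract-1}, which is also what your opening sentence implicitly does.)

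The step you flag as the obstacle — the equivalence $G_1(\beta(b))\ge 0\iff 4b^3\ge -27ca^2$ — is indeed where your intermediate claims fail as stated. The function $\varphi(\beta)=a\beta^3+b\beta+c\beta^{-3}$ is \emph{not} maximized at $\beta(b)=\sqrt{-2b/(3a)}$ (its critical point depends on $c$), and your fallback sufficient condition $a\beta^3+c/\beta^3\ge 0$ is vacuous, since for $a,c<0$ that expression is always negative. The ``different grouping'' you suspect is simply multiplication by $\beta^3>0$: $G_1(\beta)\ge 0$ iff $a\beta^6+b\beta^4+c\ge 0$, i.e.\ iff $\psi(\beta)\ge -c$ where $\psi(z):=az^6+bz^4$ (this is the function $\varphi$ appearing in the remark after the corollary). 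Since $\psi'(z)=2z^3(3az^2+2b)$ and $a<0<b$, the maximum of $\psi$ on $(0,\infty)$ is attained precisely at $z=\sqrt{-2b/(3a)}=\beta(b)$, with value
$$\psi(\beta(b))=a\left(\frac{-2b}{3a}\right)^{3}+b\left(\frac{-2b}{3a}\right)^{2}=\frac{4b^3}{27a^2},$$
so $G_1(\beta(b))\ge 0$ holds iff $4b^3/(27a^2)\ge -c$, which is exactly (\ref{beta-cond}). With this computation inserted, your argument is complete and coincides with the paper's proof.
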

\begin{proof} The proof for the case $a=0$ follows exactly as in the preceding result. For $a<0$, let $G_x$ be defined as in the previous proof, then clearly $G_x\ge G_1$, so it suffices to verify that $G_1(\beta(b))\ge 0$. In turn, this is equivalent to inequality (\ref{beta-cond}), 
    so the result follows.
\end{proof}

 \begin{rem}
     According with the preceding comment regarding the continuous model (\ref{eq-cont}), it is observed, in the second case,  that (\ref{beta-cond}) cannot be satisfied when $N$ is large.   
     
 \end{rem}

It is observed, in the latter proof, that  $\beta(b)$ 
is computed as the (unique) positive value in which the absolute maximum of the 
function $\varphi(z):= az^6 + bz^4$ with $z\ge 0$ is achieved. This shows that, if the inequality in (\ref{beta-cond}) is strict, then the conditions may be relaxed by considering the maximal interval $\mathcal I_c\subset (0,+\infty)$ such that $\varphi(z)\ge -c$ for $z\in I_c$. 
With this in mind, let us set the notation $M_c$ for the upper endpoint of $\mathcal I_c$. 

\begin{cor} For $a,c < 0 < b$, assume that  (\ref{beta-cond}) holds and let 
$\mathcal I_c$ and $M_c$ be defined as before. Then
\begin{enumerate}
    \item Problem (\ref{disc-eq})-(\ref{dir}) has at least one positive solution for arbitrary $D_0, D_N\in (0,M_c]$. 
    \item Problem (\ref{disc-eq})-(\ref{disc-robin}) has at least one positive solution, provided that 
    $$f_0(\alpha) \le 0\le f_N(\alpha),\qquad 
    f_0(\beta)\ge 0\ge f_N(\beta)
    $$
    for some sufficiently small $\alpha>0$ and some $\beta\in \mathcal I_c$. 
    
\end{enumerate} 
    
\end{cor}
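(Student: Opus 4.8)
The plan is to reduce the corollary to Theorem \ref{th2} (case $a<0$) and its proof, using the observation already made in the text that $\beta(b)$ is exactly the maximiser of $\varphi(z)=az^6+bz^4$ on $z\ge 0$. First I would record the elementary properties of $\varphi$: since $a<0<b$, $\varphi(0)=0$, $\varphi$ is positive on $(0,\sqrt{-b/a})$, tends to $-\infty$, and has a unique critical point in $(0,\infty)$ at $z=\beta(b)$, where it attains its maximum value $\varphi(\beta(b))$. Condition (\ref{beta-cond}), $4b^3\ge -27ca^2$, is precisely $\varphi(\beta(b))\ge -c$ (this is the computation behind Theorem \ref{th2}), so the superlevel set $\{z\ge 0:\varphi(z)\ge -c\}$ is nonempty; by continuity and the shape of $\varphi$ it is a closed interval $\mathcal I_c=[m_c,M_c]$ with $0<m_c\le\beta(b)\le M_c<\sqrt{-b/a}$, and $M_c$ is its upper endpoint as defined in the text.

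Next I would show that any $\beta\in\mathcal I_c$ is a constant upper solution for (\ref{disc-eq}). Exactly as in the proof of Theorem \ref{th2}, with $G_x(t)=at^3+bxt+\frac{c}{t^3}$ one has $G_x(\beta)\ge G_1(\beta)$ for all $x=1,\dots,N-1$ because $b>0$ and $\beta>0$; and
$$
G_1(\beta)=a\beta^3+b\beta+\frac{c}{\beta}=\frac{1}{\beta^3}\bigl(a\beta^6+b\beta^4+c\bigr)=\frac{1}{\beta^3}\bigl(\varphi(\beta)+c\bigr)\ge 0
$$
precisely because $\beta\in\mathcal I_c$ means $\varphi(\beta)\ge -c$. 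Hence $\Delta^2\beta_{x-1}=0\le G_x(\beta_x)$, and the constant function $\beta$ is an upper solution. For the lower solution I take a sufficiently small constant $\alpha>0$: since $c<0$, $\frac{c}{t^3}\to-\infty$ as $t\to 0^+$, so $G_x(\alpha)<0$ for all $x$ once $\alpha$ is small, giving $\Delta^2\alpha_{x-1}=0\ge G_x(\alpha_x)$; shrinking $\alpha$ further we also ensure $\alpha<\beta$ and, in the Dirichlet case, $\alpha\le D_0,D_N$ (using $D_0,D_N\in(0,M_c]$ so that $\beta=M_c$, say, dominates them). Then Lemma \ref{sub-sup} applies and yields a positive solution between $\alpha$ and $\beta$.

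For part (1) one picks $\beta=M_c$ and any small $\alpha>0$ with $\alpha\le\min\{D_0,D_N\}$; the ordering $\alpha\le\beta$ and the boundary inequalities $\beta_0=\beta_N=M_c\ge D_0,D_N$, $\alpha_0=\alpha_N=\alpha\le D_0,D_N$ hold, so Lemma \ref{sub-sup} gives the solution. For part (2), given $\beta\in\mathcal I_c$ with $f_0(\beta)\ge 0\ge f_N(\beta)$, the upper-solution Robin inequalities $\Delta\beta_0=0\le f_0(\beta_0)$ and $\Delta\beta_{N-1}=0\ge f_N(\beta_N)$ are satisfied; choosing $\alpha>0$ small with $G_x(\alpha)<0$, $\alpha<\beta$, and $f_0(\alpha)\le 0\le f_N(\alpha)$ gives the lower-solution Robin inequalities, and Lemma \ref{sub-sup} again applies. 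I do not expect a serious obstacle here; the only point needing a little care is the structure of the superlevel set $\mathcal I_c$ — that it is genuinely an interval containing $\beta(b)$ — which follows from unimodality of $\varphi$ on $[0,\infty)$, and the bookkeeping that $\alpha$ can be chosen small enough to simultaneously meet all the required smallness conditions.
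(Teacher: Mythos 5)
Your proof is correct and is precisely the argument the paper intends: the corollary is stated without proof as a direct consequence of the observation that $\beta(b)$ maximises $\varphi(z)=az^6+bz^4$, and your verification that any constant $\beta\in\mathcal I_c$ satisfies $G_x(\beta)\ge G_1(\beta)=\beta^{-3}(\varphi(\beta)+c)\ge 0$, paired with a small constant lower solution and Lemma \ref{sub-sup}, is exactly the same route as the proof of Theorem \ref{th2}. No issues.
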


The next corollary is directly deduced from the fact that, 
as $b\to +\infty$, the value
$\beta(b)$ tends to $+\infty$ and the lower endpoint of $\mathcal I_c$ tends to $0$. 

\begin{cor} Assume that $a, c< 0$, then:
\begin{enumerate}
    \item Given arbitrary $D_0, D_N>0$, there exists $b^*>0$ such that problem (\ref{disc-eq})-(\ref{dir}) has at least one positive solution if $b\ge b^*$.     
    \item There exists $b^*>0$ such that problem (\ref{disc-eq})-(\ref{dir}) has at least one positive solution if $b\ge b^*$, provided that 
    $$
    f_0(\beta)\ge 0\ge f_N(\beta)$$
    for some $\beta >0$  and
    $$f_0(\alpha_n) \le 0\le f_N(\alpha_n)
    $$
       for some sequence $\alpha_n\to 0^+$.
        \end{enumerate}
\end{cor}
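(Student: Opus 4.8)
The plan is to read off both statements from the corollary immediately preceding them, via a straightforward limiting argument in $b$. The point is that, with $a,c<0$ fixed, increasing $b$ produces two monotone effects already noted in the remark above: the constant $\beta(b)=\sqrt{-2b/3a}$ of (\ref{beta}) diverges, and the auxiliary interval $\mathcal I_c=\{z>0:\varphi(z)\ge -c\}$, where $\varphi(z)=az^6+bz^4$, grows to exhaust $(0,+\infty)$; indeed, for each fixed $z>0$ one has $\varphi(z)=az^6+bz^4\to+\infty$ as $b\to\infty$, while $\varphi(0)=0<-c$, so the lower endpoint of $\mathcal I_c$ tends to $0$ and (by the sign of $a$) the upper endpoint $M_c$ stays finite. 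I would first dispose of hypothesis (\ref{beta-cond}): since $-27ca^2$ is a fixed positive constant, there is $b_0>0$ with $4b^3\ge -27ca^2$ for all $b\ge b_0$, so from here on I restrict attention to $b\ge b_0$ and the preceding corollary applies.

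For part (1), given $D_0,D_N>0$ I would choose $b^*\ge b_0$ so large that $\beta(b)\ge\max\{D_0,D_N\}$ for all $b\ge b^*$, which is possible because $\beta(b)\to+\infty$. Since $\beta(b)$ is the argmax of $\varphi$ on $[0,\infty)$ and, under (\ref{beta-cond}), $\varphi(\beta(b))\ge -c$, it lies in $\mathcal I_c$, hence $\beta(b)\le M_c$; therefore $D_0,D_N\in(0,M_c]$ and part (1) of the preceding corollary yields a positive solution of (\ref{disc-eq})-(\ref{dir}). For part (2) I would fix $\beta>0$ with $f_0(\beta)\ge 0\ge f_N(\beta)$ and the sequence $\alpha_n\to 0^+$ with $f_0(\alpha_n)\le 0\le f_N(\alpha_n)$; enlarging $b^*\ge b_0$ if necessary, $\varphi(\beta)=a\beta^6+b\beta^4\to+\infty$ forces $\varphi(\beta)\ge -c$, i.e. $\beta\in\mathcal I_c$, for all $b\ge b^*$. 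For each such $b$ I would then pick $n$ large enough that the constant $\alpha_n$ is "sufficiently small" in the sense required by that corollary, namely $a\alpha_n^3+bx\alpha_n+c/\alpha_n^3<0$ for $x=1,\dots,N-1$; this holds once $\alpha_n$ is below a ($b$-dependent) threshold, because $c/\alpha_n^3\to-\infty$ dominates as $\alpha_n\to 0$. With $\alpha=\alpha_n$ and this $\beta$, all hypotheses of part (2) of the preceding corollary are met and a positive solution of the nonlinear Robin problem follows.

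I do not anticipate a real obstacle here; the argument is purely a matter of choosing $b^*$ large. The two points needing a line of care are: (i) verifying $\beta(b)\le M_c$ so that the Dirichlet data genuinely fall in $(0,M_c]$, which uses that $\beta(b)$ realises the maximum of $\varphi$; and (ii) the order of quantifiers in part (2) — the threshold below which a small constant is a lower solution depends on $b$, so $n$ (and hence $\alpha_n$) must be selected after $b$, which is harmless since $\alpha_n\to 0$. (It should also be noted that, despite the displayed statement referring to (\ref{dir}), the boundary conditions relevant in part (2) are the Robin conditions (\ref{disc-robin}), as the hypotheses on $f_0,f_N$ indicate.)
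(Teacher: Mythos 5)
Your argument is correct and is precisely the elaboration of the paper's own one-line justification (that as $b\to+\infty$ the value $\beta(b)$ tends to $+\infty$ and the lower endpoint of $\mathcal I_c$ tends to $0$, so the preceding corollary applies for $b$ large); your care about the order of quantifiers in part (2) and the observation that $\beta(b)\le M_c$ are exactly the details left implicit. You are also right that the reference to (\ref{dir}) in part (2) of the statement is a typo for (\ref{disc-robin}).
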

In the same spirit of the latter corollary, it is seen that solutions exist when the negative constant $c$ is close enough to $0$:

\begin{cor} Assume that $a< 0 < b$ and define $\beta(b)$ as before. Then:
\begin{enumerate}
    \item There exists $c_*>0$ such that (\ref{disc-eq})-(\ref{dir}) has at least one positive solution for $-c_*<c<0$, provided that $\beta(b)\ge D_0, D_N$. 
    \item There exists $c_*>0$ such that (\ref{disc-eq})-(\ref{disc-robin}) has at least one positive solution for $-c_*<c<0$, provided that 
    \begin{enumerate}
        \item $f_0(\beta(b)) \ge 0 \ge f_N(\beta(b))$.
        \item $f_0(\alpha_n)\le 0\le f_N(\alpha_n)$ for some sequence $\alpha_n\to 0^+$. 
        
    \end{enumerate}
\end{enumerate}
\end{cor}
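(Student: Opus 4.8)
The plan is to deduce both assertions directly from the previous corollary together with a limiting argument in the parameter $c$, exploiting the monotone dependence of the threshold data on $c$. Fix $a<0<b$ and recall that $\beta(b)=\sqrt{-2b/(3a)}$ is the unique positive maximiser of $\varphi(z)=az^6+bz^4$, with maximum value $\varphi(\beta(b))=\frac{-2b}{3a}\cdot\left(\frac{-2b}{3a}\right)^2\!\big(a\cdot\frac{-2b}{3a}+b\big)=\frac{4b^3}{27a^2}>0$. The key observation is that condition (\ref{beta-cond}), namely $4b^3\ge -27ca^2$, is equivalent to $\varphi(\beta(b))\ge -c$, and this holds \emph{strictly} for every $c$ with $-c_*<c<0$ once we set $c_*:=\frac{4b^3}{27a^2}>0$. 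Hence for such $c$ the hypotheses of the corollary preceding this one are in force.

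First I would treat the Dirichlet case. Given $D_0,D_N>0$ with $\beta(b)\ge D_0,D_N$, the strict inequality $\varphi(\beta(b))>-c$ shows that $\beta(b)$ lies in the interior of the interval $\mathcal I_c=\{z>0:\varphi(z)\ge -c\}$, so in particular $D_0,D_N\le\beta(b)\le M_c$ and part (1) of the previous corollary applies verbatim, producing a positive solution of (\ref{disc-eq})-(\ref{dir}). For the Robin case, under hypotheses (a) and (b) choose, using (b), a sufficiently small $\alpha_n>0$ with $f_0(\alpha_n)\le 0\le f_N(\alpha_n)$; since $\beta(b)\in\mathcal I_c$ and, by (a), $f_0(\beta(b))\ge 0\ge f_N(\beta(b))$, part (2) of the previous corollary yields a positive solution of (\ref{disc-eq})-(\ref{disc-robin}) with $\alpha=\alpha_n$ and $\beta=\beta(b)$. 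In both cases the same $c_*=\frac{4b^3}{27a^2}$ works, independently of $D_0,D_N$ or of the functions $f_0,f_N$.

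The only subtlety — and the step I would be most careful about — is matching the small-$\alpha$ requirement of Lemma \ref{sub-sup} when $c\to 0^-$: as $|c|$ shrinks, the lower endpoint of $\mathcal I_c$ tends to $0$, so one must check that the constant lower solution $\alpha$ supplied by the sign conditions can indeed be taken inside $\mathcal I_c$ and small enough that $G_1(\alpha)=a\alpha^3+b\alpha+c/\alpha^3<0$; but $c<0$ makes the term $c/\alpha^3\to-\infty$ as $\alpha\to 0^+$, so this is automatic, exactly as in the proof of Theorem \ref{th2}, and no uniformity in $c$ is lost. Everything else is a direct invocation of the established corollaries, so the argument closes. $\Box$
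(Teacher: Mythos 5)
Your argument is correct and matches the paper's intended (unwritten) deduction: the corollary is obtained directly from the preceding one by observing that $\varphi(\beta(b))=\frac{4b^3}{27a^2}>0$, so that (\ref{beta-cond}) holds (strictly) and $\beta(b)\in\mathcal I_c$ with $\beta(b)\le M_c$ for every $c\in(-c_*,0)$ with $c_*:=\frac{4b^3}{27a^2}$, after which one invokes the previous corollary with $\beta=\beta(b)$ and $\alpha=\alpha_n$ small. (One minor slip: your intermediate expression for $\varphi(\beta(b))$ amounts to $z^6(az^2+b)$ rather than $z^4(az^2+b)$, but the final value $\frac{4b^3}{27a^2}$ is the correct one.)
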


\subsection{The homogeneous Dirichlet problem}

In this section, we shall consider the problem (\ref{disc-eq}) under the homogeneous condition
\begin{equation}
    \label{dir0} u_0=u_N =0.
\end{equation}
In contrast with the continuous case, 
here the singularity at the boundary does not imply that the derivatives are unbounded. In fact, if a sequence   of solutions  is bounded, then so is the sequence of its derivatives. This suggests  us to look for a solution of (\ref{disc-eq})-(\ref{dir0}) as the limit of a sequence 
of positive solutions of  (\ref{disc-eq})-(\ref{dir})
with Dirichlet conditions converging to $0$. 
Such a procedure yields the following result. 

\begin{thm} \label{attract-hom} Problem (\ref{disc-eq})-(\ref{dir0}) has at least one solution with $u_x >0$ for $x=1,\ldots, N-1$, provided that one of the following conditions holds:
    \begin{enumerate}
        \item $a > 0>c$.
        \item $a=0$ and $b>0>c$. 
        \item $a,c<0$ and $4b^3 > -27ca^2$.  
    \end{enumerate}
    Furthermore, the solution is unique in the second case, and also in the first case under the assumption (\ref{uniq}).
\end{thm}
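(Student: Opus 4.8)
The plan is to obtain the solution of the homogeneous problem (\ref{disc-eq})-(\ref{dir0}) as a limit of positive solutions $u^{(n)}$ of the Dirichlet problems (\ref{disc-eq})-(\ref{dir}) with $D_0 = D_N = \frac1n$, whose existence is guaranteed in all three cases by Theorem \ref{attract-1}, Theorem \ref{th2}, and the Corollary following Theorem \ref{th2} (with $\frac1n \in (0,M_c]$ for $n$ large). First I would record a uniform upper bound: in each of the three cases, a positive constant upper solution $\beta$ exists — $\beta$ large in case (1) and case (2), and $\beta = \beta(b)$ (or any $\beta \in \mathcal I_c$) in case (3) — and this $\beta$ works simultaneously for every $D_0,D_N \le \beta$, hence $u^{(n)}_x \le \beta$ for all $x$ and all $n$. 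Since the sequence $(u^{(n)})$ lives in a bounded subset of $\R^{N+1}$, the derivatives $\Delta u^{(n)}$ and $\Delta^2 u^{(n)}$ are automatically bounded (this is the observation about the discrete case emphasised before the statement), so by Bolzano–Weierstrass a subsequence converges to some $u \in \R^{N+1}$ with $0 \le u_x \le \beta$, and passing to the limit in (\ref{disc-eq}) and in the boundary condition gives $u_0 = u_N = 0$ together with $\Delta^2 u_{x-1} = G_x(u_x)$ for $x = 1,\dots,N-1$ — provided each $u_x$ is strictly positive for $x = 1,\dots,N-1$, which is exactly what must be shown to make sense of the term $c/u_x^3$ in the limit.

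The main obstacle is therefore this strict-positivity (non-degeneracy) of the limit at the interior nodes: one must rule out $u_x \to 0$ along the sequence. I would establish a uniform \emph{lower} bound $u^{(n)}_x \ge \delta > 0$ for $x = 1,\dots,N-1$, with $\delta$ independent of $n$. Here is where the attractive sign $c < 0$ is used decisively. Suppose $m_n := \min_{1\le x\le N-1} u^{(n)}_x \to 0$, attained at some interior index $x_n$; at an interior minimum $\Delta^2 u^{(n)}_{x_n-1} \ge 0$, so $G_{x_n}(m_n) \ge 0$, i.e. $a m_n^3 + b x_n m_n + \frac{c}{m_n^3} \ge 0$. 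As $m_n \to 0^+$ the dominant term is $c/m_n^3 \to -\infty$ (since $c<0$), while the other two terms stay bounded because $0 < m_n \le \beta$ and $x_n \le N-1$, so the inequality fails for $n$ large — a contradiction. (One should note that this argument is insensitive to the sign or size of $a$ and $b$, which is why the same lower bound serves all three cases.) Thus $u^{(n)}_x \ge \delta$ on the interior uniformly, so $1/(u^{(n)}_x)^3$ is bounded, the passage to the limit is legitimate, and the limit $u$ satisfies $u_x \ge \delta > 0$ for $x = 1,\dots,N-1$, solving (\ref{disc-eq})-(\ref{dir0}).

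For the uniqueness assertions, I would argue directly on the homogeneous problem rather than via the approximating sequence. In case (2), $a = 0$ and $b > 0$, and in case (1) under (\ref{uniq}), one reruns the computation in the proof of Theorem \ref{attract-1}: if $u,v$ are two solutions with $u_x, v_x > 0$ on the interior and $u_0 = u_N = v_0 = v_N = 0$, set $w = u - v$; since $t \mapsto at^3 + \frac{c}{t^3}$ is (strictly) increasing on $(0,\infty)$ when $a \ge 0 \ge c$ — here using both $a\ge0$ and $c<0$ — one gets $[G_x(u_x)-G_x(v_x)](u_x-v_x) \ge bx\,w_x^2 \ge 0$ (with the constant $M$ of Theorem \ref{attract-1}, which vanishes when $a=0$, retained in case (1)), and then summation by parts together with Lemma \ref{parts} and the eigenvalue estimate of Lemma \ref{eigen} forces $\|\Delta w\|_2^2 \le \lambda_1^{-1}\cdot(\text{nonpositive quantity})$ under (\ref{uniq}), hence $w \equiv 0$; when $a = 0, b>0$ the term $\sum bx\,w_x^2$ alone already forces $w_x = 0$ on the interior, and with $w_0 = w_N = 0$ this gives $w \equiv 0$. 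No new difficulty arises here beyond transcribing the Dirichlet uniqueness argument already carried out, with the only subtlety being that strict positivity of the solutions on the interior — just established — is what legitimises differentiating/monotonicity of $c/t^3$.
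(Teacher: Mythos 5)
Your proposal follows essentially the same route as the paper: approximate by Dirichlet data tending to $0$ under a common constant upper solution $\beta$, extract a convergent subsequence, use $c<0$ to force strict interior positivity of the limit, and repeat the Dirichlet uniqueness computation. One small slip: the claim $\Delta^2 u^{(n)}_{x_n-1}\ge 0$ at the interior minimum can fail when $x_n=1$ or $x_n=N-1$ (the adjacent boundary value $\tfrac1n$ may be smaller than $m_n$), but this is harmless because $0<u^{(n)}_x\le\beta$ already gives $\Delta^2 u^{(n)}_{x-1}\ge -2\beta$, which is all that the contradiction with $c/m_n^3\to-\infty$ requires --- and that boundedness of the left-hand side is precisely the observation the paper itself uses.
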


\begin{proof} Fix an arbitrary sequence $r_k\searrow 0$ and a constant $\beta>0$ such 
that $G_x(\beta)\ge 0$ for $x=1,\ldots, N-1$. 
According to the results in the preceding section, $\beta$ 
serves as an upper solution of the Dirichlet problem when 
$D_0, D_N\le \beta$. 
Thus, we may assume that  
$r_k$ is a  lower solution 
with $r_k< \beta$ for all $k$ 
and define $u{(k)}$ as a solution of (\ref{disc-eq}) between $r_k$ and $\beta$ satisfying
$u(k)_0=u(k)_N = r_k$.  
Since $u(k)_x\le \beta$ for all $x=0,\ldots,N$, taking a subsequence we may assume that $\{u(k)\}$ converges  to some $u$ such that $u_0=u_N=0$. Furthermore, the identity 
$$
 \Delta^2 u(k)_{x-1} = a u(k)_{x}^3 + bxu(k)_{x} + \frac c{u(k)_x^3} \qquad x=1,\ldots, N-1
$$
combined with the fact that the left-hand side term is bounded imply that $u_x>0$ and
$u$ verifies (\ref{disc-eq}) for $x=1,\ldots, N-1$. 
Uniqueness is deduced exactly as before. 
\end{proof}
 
  \section{Repulsive case}

Here, we deal with the case $c>0$. 
We shall consider firstly the Dirichlet 
problem. 

\begin{thm}
    Assume $c>0>a$, then 
     (\ref{disc-eq})-(\ref{dir}) with $D_0, D_N\ge 0$ has at least one solution $u$ with $u_x>0$ for $x=1,\ldots, N-1$. 
\end{thm}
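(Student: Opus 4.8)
The plan is to realise the desired solution as an \emph{interior maximiser} of the natural energy functional over a suitably chosen compact box, rather than via sub- and supersolutions (which fails here: since $a<0$ no constant supersolution exists, and since $c>0$ the repulsive singularity forces $G_x>0$ near $0$, so no small constant subsolution exists either). The whole argument hinges on an a priori \emph{upper} bound; this is exactly the point where the methods of \cite{pacr15} break down, because there one integrates the autonomous equation after multiplying by $u'$, which cannot be imitated discretely. What replaces it is a purely discrete observation: a positive solution is nonnegative at the two neighbours of an interior maximum, which feeds back into the equation to bound the maximum.

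First I would set up the a priori bounds. Write $G_x(t):=at^3+bxt+c/t^3$ and let $u$ be any positive solution of (\ref{disc-eq})-(\ref{dir}) with $D_0,D_N>0$. If the maximum of $u$ is attained at an interior node $x_0$, then from $u_{x_0+1}+u_{x_0-1}=2u_{x_0}+G_{x_0}(u_{x_0})$ and $u_{x_0\pm1}\ge0$ we get $at^3+(bx_0+2)t+c/t^3\ge0$ with $t=u_{x_0}$; multiplying by $t^3$ and using $a<0$ bounds $t$ by a constant $\bar R=\bar R(a,b,c,N)$. Fix $R\ge\max\{\bar R,D_0,D_N\}$ large enough that in addition $G_x(R)\le -2R$ for all $x$ (possible since $G_x(R)/R\to-\infty$); then $u_x\le R$ for all $x$. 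At an interior minimum $x_1$ the same identity together with $u_{x_1\pm1}\le R$ gives $c/u_{x_1}^3\le K:=2R+|a|R^3+|b|NR$, hence $u_{x_1}\ge r_0:=(c/K)^{1/3}$; fix $r\le\min\{r_0,D_0,D_N\}$ small enough that in addition $G_x(r)\ge 2R$ for all $x$ (possible since $G_x(r)\to+\infty$ as $r\to0^+$, uniformly in $x$). Thus every positive solution satisfies $r\le u_x\le R$.

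Next I would maximise. Let $\widetilde G_x(t):=\frac{a}{4}t^4+\frac{bx}{2}t^2-\frac{c}{2t^2}$, an antiderivative of $G_x$, and put $\mathcal E(u):=\frac{1}{2}\sum_{x=0}^{N-1}(\Delta u_x)^2+\sum_{x=1}^{N-1}\widetilde G_x(u_x)$. On the compact set $\mathcal B:=\{u:\ u_0=D_0,\ u_N=D_N,\ r\le u_x\le R\ (1\le x\le N-1)\}$ the continuous functional $\mathcal E$ attains a maximum at some $\bar u$. Since $\partial_{u_x}\mathcal E(u)=-\Delta^2 u_{x-1}+G_x(u_x)$, I claim $\bar u$ lies in the interior of $\mathcal B$: if $\bar u_x=R$ for some interior $x$, maximality forces $\partial_{u_x}\mathcal E(\bar u)\ge0$, i.e. $G_x(R)\ge\Delta^2\bar u_{x-1}=\bar u_{x+1}+\bar u_{x-1}-2R\ge 2r-2R$, contradicting $G_x(R)\le -2R$; symmetrically, if $\bar u_x=r$ then $\partial_{u_x}\mathcal E(\bar u)\le0$, i.e. $G_x(r)\le\Delta^2\bar u_{x-1}\le 2R-2r<2R$, contradicting $G_x(r)\ge 2R$. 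Hence $\nabla\mathcal E(\bar u)=0$, which is precisely (\ref{disc-eq}) for $x=1,\dots,N-1$; moreover $r<\bar u_x<R$, so in particular $\bar u_x>0$.

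Finally, the case $D_0=0$ and/or $D_N=0$ is handled by approximation, exactly in the spirit of the proof of Theorem~\ref{attract-hom}: apply the previous step to $D_0^{(k)},D_N^{(k)}>0$ decreasing to $D_0,D_N$; the resulting solutions are uniformly bounded (the upper bound depends only on $a,b,c,N$ and on a bound for $D_0^{(k)},D_N^{(k)}$), so along a subsequence they converge to some $u$ with $u_0=D_0$, $u_N=D_N$; since the left-hand side of (\ref{disc-eq}) stays bounded while $c/u_x^3$ would blow up at a vanishing interior node, necessarily $u_x>0$ for $x=1,\dots,N-1$, and $u$ solves (\ref{disc-eq}). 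The main obstacle throughout is the a priori upper bound, and it is the discrete nonnegativity argument at an interior maximum — with no continuous analogue — that removes it.
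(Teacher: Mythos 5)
Your argument is correct, but it takes a genuinely different route from the paper. The paper proves this theorem topologically: it rewrites the problem as the polynomial system $P_x(u)= au_x^6 + (2+bx)u_x^4 -(u_{x-1}+u_{x+1})u_x^3 + c=0$ and deforms it, via the homotopy $H_\lambda$, to the decoupled system $au_x^6+c=0$, computing $\deg(H_1,\Omega,0)=(-1)^{N-1}$ (Remark \ref{polynomials} notes that Poincar\'e--Miranda would also do, since $P_x=c>0$ on the faces $u_x=0$ and $P_x\sim aR^6<0$ on the faces $u_x=R$ --- the same sign structure your barrier choices $G_x(r)\ge 2R$, $G_x(R)\le -2R$ exploit). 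You instead maximise the functional (\ref{functional}) over a compact box and use one-sided derivative conditions at the faces to show the maximiser is interior; this is essentially the coercivity observation the paper only sketches in its variational section (``a similar conclusion follows when $a<0<c$''), carried out in detail with explicit barriers. Your sign bookkeeping checks out: $\partial_{u_x}\mathcal E=-\Delta^2u_{x-1}+G_x(u_x)$, maximality at $u_x=R$ gives $G_x(R)\ge \bar u_{x+1}+\bar u_{x-1}-2R>-2R$, and at $u_x=r$ gives $G_x(r)\le 2R-2r<2R$, contradicting the choices of $R$ and $r$; and your a priori bound via nonnegativity of the neighbours (so $au_x^6+(bx+2)u_x^4+c\ge 0$ at every interior node) is sound. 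What each approach buys: the degree computation is reusable for multiplicity questions via degree theory (as the paper remarks), while your construction produces the solution as a global maximiser with the explicit localisation $r<u_x<R$ and avoids degree theory altogether, which fits the linking-type programme announced in the paper. One simplification: your final approximation step for $D_0=0$ or $D_N=0$ is unnecessary --- the singular terms in $\mathcal E$ involve only interior nodes, so the box-maximisation works verbatim with zero boundary data (at $x=1$, say, the neighbour sum $\bar u_0+\bar u_2\ge r>0$ still yields the contradiction), just as the paper's degree argument handles $D_0,D_N\ge0$ directly.
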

\begin{proof}
 Let us firstly observe that the problem  may be also written as
 $$u_{x+1} + u_{x-1} = a u_{x}^3 + (2+bx)u_{x} + \frac c{u_x^3} \qquad x=1,\ldots, N-1,
 $$
 where it is already assumed that $u_0=D_0$ and $u_N=D_N$. 
 This shows that $u$ is a (positive) solution if and only if $P_x(u)=0$ for all $x=1,\ldots, N-1$, where
$$P_x(u)= au_x^6 + (2+bx)u_x^4 -(u_{x-1} + u_{x+1}) u_x^3 + c.$$
Set 
$$\Omega:=\{ u\in \R^{N-1}: 0<u_x< R, \; x=1,\ldots, N-1 \}$$
for some $R>0$ to be defined, and  the 
homotopy $H:\overline\Omega\times [0,1]\to \R^{N-1}$ given by
$$H(u,\lambda)_x:= 
au_x^6 + \lambda \left[(2+bx)u_x^4 -(u_{x-1} + u_{x+1}) u_x^3\right] + c.
$$
It is observed that, if $u_x=0$ for some $x$, then $H_\lambda (u)_x:=H(u,\lambda)_x=c\ne 0$. Furthermore, taking $R$ sufficiently large it is also clear that also
$H_\lambda (u)_x\ne 0$ when $u_x=R$; in other words, we have verified that 
$H_\lambda$ does not vanish on $\partial \Omega$ for arbitrary $\lambda\in [0,1]$. From the homotopy invariance of the Brouwer degree, we deduce that 
$$\deg(H_1,\Omega,0)= \deg(H_0,\Omega,0)= (-1)^{N-1},
$$
where the latter equality follows from the 
fact that $H_0(u)_x=au_x^6+c$, which has a 
unique simple positive root $u_*:= \sqrt[6]{\frac{-c}a}$ and the sign of the Jacobian determinant of $H_0$ at $(u_*,\ldots, u_*)$ coincides with that of $a^{N-1}$. 
Hence, the existence property of the Brouwer degree implies that 
$H_1$ has at least one root in $\Omega$, 
which corresponds to a solution of the problem.
\end{proof}

\begin{rem} \label{polynomials}
    \begin{enumerate}
        \item The previous proof is also valid when $c<0<a$; thus, the existence part of 
        Theorem \ref{attract-1} is retrieved, including the homogeneous case treated in Theorem \ref{attract-hom}. 
        In both situations, the fact that $ac<0$ allows the direct  use the Poincar\'e-Miranda theorem instead of the Brouwer degree. 
        Indeed, it suffices to observe, for $u\in \overline \Omega$, that 
        $$P_x(u)=c\qquad \hbox{if $u_x=0$}$$
        and 
        $$P_x(u)\sim aR^6\qquad \hbox{if $u_x=R$},$$
which shows that $P_x$ changes sign at the corresponding faces of $\Omega$. 
However, the topological degree is more general and  may constitute an useful tool when searching for multiple solutions. 

        \item It is worth recalling, 
        for the non-homogeneous continuous case (\ref{eq-cont}), that the assumption $A<0<C$ yields in fact the existence of infinitely many solutions. More precisely, for all $k\in \mathbb N$ sufficiently large there exist at least two solutions with exactly $k$ nodal regions with respect to the segment joining the boundary values. 
        Such a result cannot be reproduced in the discrete model for obvious reasons, although it is expected that the number of solutions increases as $N$ gets larger. 
        As a toy model, let  $N=2$, 
        then the problem reduces to find the zeros of the polynomial 
        $$P_1(u_1)=au_1^6 + (2+b)u_1^4 -(D_{0} + D_{2}) u_1^3 + c,$$
which trivially verifies 
$$P_1(0)= c>0,\qquad P_1(u_1)\to -\infty\quad\hbox{as $u_1\to+\infty$}.$$
By computing its derivative (or simply by the Descartes rule), it is easy to conclude that there exist at most $3$ positive solutions; moreover, 
the solution 
is unique if and only if 
$$b+2 < \frac 94 \left(\frac {-a(D_{0} + D_{2})^2}2 \right)^{1/3}.$$

    \end{enumerate}
\end{rem}

Next, we proceed with the Robin case:

\begin{thm}\label{rob-rep}
Assume $a<0<c$ and 
\begin{enumerate}
    \item $f_0(\eta) \le 0 \le f_N(\eta)$ for some $\eta>0$ small enough.
    \item There exists a sequence $R_n\to+\infty$ such that 
    $$\liminf_{n\to\infty} \frac {f_0(R_n)}{R_n} > -1, \qquad   \limsup_{n\to\infty} \frac {f_N(R_n)}{R_n} < 1. 
    $$
 \end{enumerate}
   Then (\ref{disc-eq})-(\ref{disc-robin}) has at least one positive solution. 
\end{thm}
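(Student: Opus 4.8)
The plan is to reproduce, now in $\R^{N+1}$, the topological-degree scheme just used for the Dirichlet problem with $c>0>a$, incorporating the two Robin relations as two additional scalar equations. The sought solutions will be bounded away from $0$ at the endpoints, so it suffices to work on a box on which $u_0,u_N$ remain positive, where $f_0(u_0)$ and $f_N(u_N)$ make sense; there I would look for zeros of the continuous map $F:\overline\Omega\to\R^{N+1}$,
$$F(u)_0:=u_1-u_0-f_0(u_0),\qquad F(u)_N:=u_N-u_{N-1}-f_N(u_N),$$
$$F(u)_x:=au_x^6+(2+bx)u_x^4-(u_{x-1}+u_{x+1})u_x^3+c\qquad(x=1,\dots,N-1).$$
A zero of $F$ with all coordinates positive is precisely a positive solution of (\ref{disc-eq})-(\ref{disc-robin}); the interior components are kept in polynomial form (multiplied by $u_x^3$) so that $F$ is continuous up to $u_x=0$, where the singular term leaves only the harmless value $c>0$.

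The choices would be made in this order. First fix $R=R_n$ from hypothesis (2), with $n$ large enough that for some $\delta\in(0,1)$ one has $f_0(R_n)\ge(-1+\delta)R_n$ and $f_N(R_n)\le(1-\delta)R_n$, and also that $R_n$ exceeds a threshold $R_*=R_*(a,b,c,N,\delta)$ dictated by the estimates below. Then, using that hypothesis (1) holds for $\eta$ as small as desired, pick $\eta>0$ with $\eta<\sqrt[6]{-c/a}$ and $(\eta+R)\eta^3<c$, and set
$$\Omega:=\{u\in\R^{N+1}:\ \eta<u_0,u_N<R,\ \ 0<u_x<R\ \ (x=1,\dots,N-1)\}.$$
Consider the homotopy $H:\overline\Omega\times[0,1]\to\R^{N+1}$ that keeps the Dirichlet-repulsive homotopy on the interior block, $H(u,\lambda)_x:=au_x^6+\lambda\big[(2+bx)u_x^4-(u_{x-1}+u_{x+1})u_x^3\big]+c$, and deforms the Robin relations into Neumann ones, $H(u,\lambda)_0:=u_1-u_0-\lambda f_0(u_0)$ and $H(u,\lambda)_N:=u_N-u_{N-1}-\lambda f_N(u_N)$. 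At $\lambda=0$ the system decouples into $au_x^6+c=0$ ($x=1,\dots,N-1$) together with $u_1=u_0$ and $u_N=u_{N-1}$, whose only solution in $\overline\Omega$ is the constant $u\equiv\sqrt[6]{-c/a}$, which lies in $\Omega$; the Jacobian there is readily seen to be nonsingular, so $\deg(H(\cdot,0),\Omega,0)\ne0$.

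The core of the proof is to show that $H(\cdot,\lambda)$ has no zero on $\partial\Omega$ for any $\lambda\in[0,1]$. On an interior face $\{u_x=0\}$ one has $H(u,\lambda)_x=c>0$. On a face $\{u_x=R\}$ with $x$ interior, $aR^6$ dominates the $O(R^4)$ remainder uniformly in $\lambda$ and in the remaining coordinates, so $H(u,\lambda)_x<0$ once $R\ge R_*$. On $\{u_0=R\}$: if $H(u,\lambda)=0$ then $u_1=R+\lambda f_0(R)\ge\delta R$ by the choice of $R_n$, whence $H(u,\lambda)_1=au_1^6+O(R^4)<0$, a contradiction; the face $\{u_N=R\}$ is symmetric via $f_N(R_n)\le(1-\delta)R_n$. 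Finally, on $\{u_0=\eta\}$: if $H(u,\lambda)=0$ then $u_1=\eta+\lambda f_0(\eta)\le\eta$ by hypothesis (1), and then $H(u,\lambda)_1\ge c-(u_0+u_2)u_1^3-O(\eta^4)\ge c-(\eta+R)\eta^3-O(\eta^4)>0$, a contradiction; $\{u_N=\eta\}$ is symmetric. Homotopy invariance then gives $\deg(F,\Omega,0)=\deg(H(\cdot,0),\Omega,0)\ne0$, so $F$ has a zero in $\Omega$, i.e.\ a positive solution of (\ref{disc-eq})-(\ref{disc-robin}).

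I expect the boundary analysis to be the main obstacle. Two points require care. First, on the faces $\{u_0=R\}$ and $\{u_N=R\}$ the Robin component alone may genuinely vanish, so one cannot conclude directly and must pass to the adjacent interior equation, using hypothesis (2) to force the relevant coordinate to be of order $R$ and hence the interior component large and negative. Second, the order of the choices is essential — $R=R_n$ first (large, from (2) and $R_*$), and only afterwards $\eta$ small (so that $R\eta^3<c$ while (1) still applies) — which is what makes the estimates on the endpoint faces $\{u_0=\eta\}$, $\{u_N=\eta\}$ close up, the interior faces $\{u_x=0\}$ being automatic. A Poincar\'e-Miranda formulation, convenient for the Dirichlet case, seems harder to use here for precisely the $\{u_0=R\}$, $\{u_N=R\}$ faces.
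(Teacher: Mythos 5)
Your proposal is correct and follows essentially the same route as the paper: the same polynomial reformulation $P_x$, the same homotopy freezing the nonlinear interior terms and the Robin terms simultaneously, a box-shaped domain avoided by zeros on its faces (hypothesis (2) handled by passing from the endpoint face $u_0=R$ or $u_N=R$ to the adjacent interior equation, hypothesis (1) at the small faces), and the nonvanishing Brouwer degree of the decoupled problem at $\lambda=0$. The only cosmetic differences are that the paper bounds all coordinates below by $\eta$ while you keep the interior lower faces at $0$ as in the Dirichlet case, and that your smallness condition on $\eta$ should be stated so as to absorb the $O(\eta^4)$ and $O(\eta^6)$ terms as well, a trivial adjustment.
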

  \begin{proof}
  We shall follow the outline of the previous proof, now     taking into account the Robin condition, which motivates to define
  $$P_0(u):=f_0(u_0) + u_0 - u_1,\qquad P_N(u):=-f_N(u_N) + u_N - u_{N-1}.    
  $$
  Within this context, the polynomials $P_x$ for $x=1,\ldots x_{N-1}$ are defined as before for arbitrary $u\in (0,+\infty)^{N+1}$.   
  It is noticed that the   homotopy of the preceding proof may  be extended  by setting 
$$H_\lambda(u)_0:=  \lambda f_0(u_0) + u_0 - u_1,\qquad 
H_\lambda(u)_N:=  -\lambda f_N(u_N) + u_N - u_{N-1}.$$
According to the assumptions, we may 
fix $\varepsilon>0$ and $n_0$ such that
$$f_0(R_n) + R_n  > \varepsilon R_n, \qquad f_N(R_n) - R_n < -\varepsilon R_n
$$
for all $n\ge n_0$. 
Next, 
fix $R:= R_n$ for some sufficiently large $n\ge n_0$ such that if $H_\lambda (u)_x =0$ for all $x=1,\ldots, N-1$ with 
$0< u_y \le R$ for all $y=0,\ldots, N$, then $u_x \le \varepsilon R$ for $x=1,\ldots, N-1$. 
Suppose now that 
$H_\lambda(u) = 0$ with $0<u_x\le R$ for all $x$ and $u_0=R$, then 
$$\lambda f_0(R) + R = u_1\le \varepsilon R.  
$$
This implies $f_0(R)<0$ and, consequently, 
$$\lambda f_0(R) + R \ge f_0(R) + R >\varepsilon R,$$
 a contradiction. In the same way, it is verified that, 
 if $H_\lambda(u) = 0$ with $0<u_x\le R$, then $u_N<R$. Now fix 
 $\eta>0$ such that 
 if $0<u_x\le R$ for all $x=0,\ldots N$ and 
 $u_j\le\eta$ for some $j=1,\ldots, N-1$, then 
 $H_\lambda(u)_j\ne 0$. Suppose that $H_\lambda(u)=0$ with $\eta \le u_x<R$ for all $x$ and $u_0=\eta$, then 
 $$\lambda f_0(\eta) + \eta =u_1 >\eta.$$
 From the hypothesis, $\eta$ may be chosen in such a way that $f_0(\eta)\le 0$, which yields a contradiction. In the same way, we deduce that $u_N>\eta$, whence the homotopy does not vanish on $\partial\Omega$, where 
 $$\Omega:= \{ u\in \R^{N+1}: \eta<u_x<R,\quad x=0,\ldots, N\}.
 $$
 Again, this implies 
 $$\deg(H_1,\Omega,0)=\deg(H_0,\Omega,0)=(-1)^{N+1}$$
  because the Jacobian matrix of $H_0$ at the unique root $(u_*,\ldots,u_*)$ is now given by
  $$\left(\begin{array}{ccccc}
      1 & -1 & 0 & \ldots  & 0 \\
      0 & 6au_*^5 & 0 &\ldots & 0\\
      0 & \ldots  &\ldots & \ldots & 0\\ 
      0 & 0 &\ldots & 6au_*^5 & 0\\
      0 & 0 & \ldots & -1 & 1
  \end{array}
  \right). 
  $$
  \end{proof}

Next,  
we shall focus on the case  $a,c>0$. 
In the Dirichlet case, it is immediately seen that solutions cannot exist when $c$ is sufficiently large. 
The same happens in the  Robin case, provided that $f_0$ and $f_N$ satisfy some appropriate growth conditions. 
The following result shows  that solutions exist in the opposite situation. 

\begin{thm}
Let $a>0$, then: 

\begin{enumerate}
    \item There exists $c_*>0$ such that  problem (\ref{disc-eq})-(\ref{dir}) has at least one positive solution for $0<c<c_*$, in the following cases:
\begin{enumerate}
    \item $D_0>0$ or $D_N>0$. 
    \item $D_0=D_N=0$ and $b < -\frac 2{N-1}$. 
\end{enumerate} 
\item There exists $c_*>0$ such that   problem (\ref{disc-eq})-(\ref{disc-robin}) has at least one positive solution for $0<c<c_*$, provided that 
    $$ {f_0(r_0)} + r_0 \le 0 \le {f_N(r_N)} - r_N 
    $$ 
    for some $r_0, r_N>0$ 
    and $$\liminf_{n\to\infty} \frac {f_0(R_n)}{R_n} > -1, \qquad   \limsup_{n\to\infty} \frac {f_N(R_n)}{R_n} < 1$$
    for some sequence $R_n\to+\infty$. 
    
\end{enumerate}

 \end{thm}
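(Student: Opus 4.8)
The plan is to mirror the two preceding topological proofs (the Dirichlet case with $c>0>a$ and the Robin case in Theorem~\ref{rob-rep}), now treating the case $a,c>0$ by a homotopy that deforms to the constant coefficient $c$ alone, using the smallness of $c$ to control the behaviour near $u_x=0$ and coercivity-type bounds to control large $u$.

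For part (1), the Dirichlet case, I would again write the problem as $P_x(u)=0$ with $P_x(u)=au_x^6+(2+bx)u_x^4-(u_{x-1}+u_{x+1})u_x^3+c$ for $x=1,\dots,N-1$, and consider on a box $\Omega=\{0<u_x<R\}$ the homotopy $H_\lambda(u)_x=au_x^6+\lambda[(2+bx)u_x^4-(u_{x-1}+u_{x+1})u_x^3]+c$. The lower boundary $u_x=0$ is handled exactly as before, since $H_\lambda(u)_x=c>0$ there. The new point is the large-$u$ face: with $a>0$ both end terms $au_x^6$ and $+c$ are positive, so I need an a~priori upper bound on solutions of $H_\lambda(u)=0$ before choosing $R$. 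This is where the case distinction (a)/(b) enters: if some $D_i>0$, the nonnegativity of the boundary data together with the structure of the equation gives a bound via a discrete maximum principle argument; if $D_0=D_N=0$ and $b<-\tfrac2{N-1}$, the coefficient $2+bx$ can be made strictly less than some bound, which together with summation by parts (Lemma~\ref{parts}) and Lemma~\ref{eigen} yields an $\ell^2$-bound on any solution, hence an $\ell^\infty$-bound, hence a valid $R$. Once $R$ is fixed, homotopy invariance gives $\deg(H_1,\Omega,0)=\deg(H_0,\Omega,0)$ with $H_0(u)_x=au_x^6+c$; but here $a,c>0$, so $H_0$ has \emph{no} positive root and the degree on $\Omega$ is zero, which would be useless. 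I therefore expect the homotopy must be chosen differently: deform instead so that the limiting map retains a sign change, for instance homotoping the $c$ term as $\lambda c$ (while keeping the cubic coupling), or exploiting the known solution produced by Theorem~\ref{th2}/sub-supersolutions for a nearby problem and using continuation in $c$. The cleanest route is probably: fix $c=0$, where $u\equiv 0$-type lower solutions and a constant upper solution $\beta$ with $a\beta^3+bx\beta\ge 0$ may fail — so more likely one uses that for $c=0$ the problem $\Delta^2u_{x-1}=au_x^3+bxu_x$ has the explicit (possibly trivial or constructible) solution, and then applies an implicit-function / topological continuation argument valid for $0<c<c_*$.

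For part (2), the Robin case, I would follow Theorem~\ref{rob-rep} verbatim in structure: append $P_0(u)=f_0(u_0)+u_0-u_1$ and $P_N(u)=-f_N(u_N)+u_N-u_{N-1}$, use the homotopy $H_\lambda(u)_0=\lambda f_0(u_0)+u_0-u_1$, $H_\lambda(u)_N=-\lambda f_N(u_N)+u_N-u_{N-1}$ on $\Omega=\{\eta<u_x<R\}$. The hypothesis ${f_0(r_0)}+r_0\le0\le{f_N(r_N)}-r_N$ plays the role of the ``$f_0(\eta)\le0\le f_N(\eta)$'' condition to prevent vanishing on the lower faces $u_0=\eta$, $u_N=\eta$ (choosing $\eta\le\min\{r_0,r_N\}$ and using smallness of $c$ to prevent interior vanishing $u_j=\eta$, $j=1,\dots,N-1$, as in the earlier proof), while the $\liminf/\limsup$ growth conditions prevent vanishing on the upper faces $u_0=R$, $u_N=R$ exactly as in Theorem~\ref{rob-rep}, and an $\ell^\infty$-bound on interior values (again leveraging $0<c<c_*$) lets one pick $R=R_n$ large. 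Then $\deg(H_1,\Omega,0)=\deg(H_0,\Omega,0)$ with $H_0$ having the same tridiagonal-type Jacobian as in Theorem~\ref{rob-rep} at its unique root, giving degree $(-1)^{N+1}\ne0$.

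The main obstacle is the degree computation in part (1): unlike the sign-changing cases $ac<0$, here the ``frozen'' map $H_0(u)_x=au_x^6+c$ has no positive zero, so the naive homotopy gives zero degree. The real content of the theorem is therefore to find the right deformation — most plausibly continuation in the parameter $c$ from $c=0$, where a solution is produced by the upper/lower solution machinery of Section~3 (note $a>0$, and in case (b) the condition $b<-\tfrac2{N-1}$ is precisely what makes a constant or small lower solution work and a coercivity bound available) — together with establishing the uniform a~priori bounds that keep the solution set away from $\partial\Omega$ for all $c\in[0,c_*]$. Verifying those bounds in case (b), via summation by parts and the eigenvalue estimate $\lambda_1=4\sin^2(\pi/2N)$ versus the coefficient $2+bx$, is the one genuinely technical step I would expect to occupy most of the work.
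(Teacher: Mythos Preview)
Your diagnosis of the obstacle is exactly right: with $a,c>0$ the frozen map $H_0(u)_x=au_x^6+c$ has no positive root, so the homotopy copied from the $ac<0$ cases yields degree zero and is useless. You also correctly intuit that the remedy is continuation from $c=0$. But you are missing the concrete device that makes this work, and your fallback suggestions (upper/lower solutions from Section~3, implicit function theorem, eigenvalue bounds via Lemma~\ref{eigen}) do not lead anywhere here.

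The paper's idea is this. Set $Q_x(u):=P_x(u)-c=u_x^3\bigl[au_x^3+(2+bx)u_x-(u_{x-1}+u_{x+1})\bigr]$ and build a box $\Omega=(\varepsilon_1,R)\times\cdots\times(\varepsilon_{N-1},R)$ whose lower corners $\varepsilon_x$ are chosen \emph{inductively}: if $D_0>0$, pick $\varepsilon_1$ so small that $at^3+(2+b)t<D_0$ for $0<t\le\varepsilon_1$, then $\varepsilon_2$ so small that $at^3+(2+2b)t<\varepsilon_1$ on $(0,\varepsilon_2]$, and so on. On the face $u_x=\varepsilon_x$ one then has $Q_x(u)<0$ (because $u_{x-1}\ge\varepsilon_{x-1}$ already dominates), while for $R$ large $Q_x(u)>0$ on $u_x=R$. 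This sign pattern lets one homotope $Q$ linearly to $I-v$ for any interior point $v$, giving $\deg(Q,\Omega,0)=1$; continuity of the degree in $c$ then yields $\deg(P,\Omega,0)=1$ for $0<c<c_*$. The case $D_N>0=D_0$ is handled by running the induction backwards from $\varepsilon_{N-1}$. In case~(b), the condition $b<-\tfrac{2}{N-1}$ is \emph{not} an eigenvalue condition: it says precisely $2+b(N-1)<0$, which is what allows the induction to start at $x=N-1$ with $at^3+(2+b(N-1))t<0$ even though $D_N=0$. No summation by parts or $\lambda_1$ is involved.

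For the Robin part you repeat the error you had already caught in part~(1): you claim $H_0$ has ``the same tridiagonal-type Jacobian as in Theorem~\ref{rob-rep} at its unique root'', but with $a>0$ there is no positive root of $au_x^6+c$. The paper instead extends the same $\varepsilon_x$-box construction, taking $\varepsilon_0:=r_0$, $\varepsilon_N:=r_N$ (this is where the hypothesis $f_0(r_0)+r_0\le 0\le f_N(r_N)-r_N$ enters, to control $\tilde Q_0$ and $\tilde Q_N$ on those faces), computes $\deg(\tilde Q,\tilde\Omega,0)=1$, and again perturbs by $(0,c,\ldots,c,0)$.
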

\begin{proof} Consider firstly the Dirichlet case with $D_0>0$ and define the function 
$$Q_x(u):= P_x(u) - c = u_x^3[au_x^3 + (2+bx)u_x -(u_{x-1} + u_{x+1})].$$
    Next, fix $\varepsilon_1>0$ such that 
    $$at^3 + (2+b)t  < D_0\qquad 0<t\le \varepsilon_1$$
    and, inductively, we may set $\varepsilon_2,\ldots, \varepsilon_{N-1}>0$
    such that 
    $$at^3 + (2+bx)t  < \varepsilon_{x-1}\qquad 0<t\le \varepsilon_{x}.
    $$
    Also, we may fix $R>0$ such that $aR^3 + (2+bx)R > 2R$ for $x=1,\ldots, N-1$ and define the open set 
    $$\Omega:= (\varepsilon_1,R)\times\ldots\times (\varepsilon_{N-1},R).$$
    It follows from the previous choice of the values $\varepsilon_x$ and $R$ 
    that if $u\in\partial \Omega$, then there exists $x$ such that either 
    $u_x=R$ and $Q_x(u)>0$, or $u_x=\varepsilon_x$ and $Q_x(u)<0$. Thus, picking an arbitrary point $v\in \Omega$, in both situations it is deduced that 
    $\lambda Q_x(u) + (1-\lambda)(u_x - v_x)\ne 0$ for $\lambda\in [0,1]$. In other words, the homotopy 
    $H_\lambda:= \lambda Q +(1-\lambda)(I-v)$ does not vanish, where $I$ denotes the identity map and, in consequence,
    $$\deg(Q,\Omega,0)= \deg(I-v,\Omega,0)=1.$$
    Because the degree is continuous, we conclude that 
     $$\deg(P ,\Omega,0)= \deg(Q+ (c,\ldots,c), \Omega,0)=1$$
when  $c$ is sufficiently small. 
The proof is analogous when $D_0=0<D_N$, now starting from an appropriate value $\varepsilon_{N-1}>0$
 such that 
$$at^3 + (2+b(N-1))t  < D_N\qquad 0<t\le \varepsilon_{N-1}
    $$
    and, inductively, choosing  values $\varepsilon_{N-2},\ldots, \varepsilon_1>0$ such that 
    $$at^3 + (2+bx)t  < \varepsilon_{x+1}\qquad 0<t\le \varepsilon_{x}.$$
Finally, if $D_0=D_N=0$ and $b < -\frac 2{N-1}$, it suffices to fix $\varepsilon_{N-1}>0$ such that 
$$at^3 + (2+b(N-1))t  < 0\qquad 0<t\le \varepsilon_{N-1}
    $$
    and proceed as before. 

    Concerning the Robin case, set $\varepsilon_0:=r_0$,  $\varepsilon_N:=r_N$ and $\varepsilon_x$ as before for $x=1,\ldots, N-1$. Then, fix a value $R>0$ as in the  proof of Theorem 
    \ref{rob-rep} and set 
    $$\tilde\Omega:= (\varepsilon_0,R)\times\ldots\times (\varepsilon_{N},R)$$
    $$\tilde Q(u);= (f_0(u_0)+ u_0-u_1, Q(u_1,\ldots,u_{N-1}), -f_N(u_N)+ u_N-u_{N-1}). $$
    Again, it is deduced that $\deg(\tilde Q, \tilde \Omega,0)=1$, and the proof follows from the fact that 
    $P  = \tilde Q + (0,c,\ldots, c, 0)$.  
\end{proof}

As a  conclusion of  this section, let us
emphasize that the
  case  $a<0<c$ can be 
analysed  by a more careful study of the associated polynomial system and, as  
mentioned in Remark \ref{polynomials}, the non-homogeneous Dirichlet problem for (\ref{eq-cont}) has infinitely many solutions. 
In contrast, the discretization involves a system of $N-1$ polynomials with $N-1$ variables and, as is well known, the number of (nondegenerate) positive solutions is always finite. A celebrated -although unsharp- upper bound of this number was given by Khovanskii in \cite{kho} and improved in later works; in our case, the system can be reduced to a single equation, so an upper bound is readily obtained from Descartes' rule of signs.
However, no general results for lower bounds seem to exist in the literature. 
Such bounds  would be of interest, since it is expected that the number of solutions tends to infinity as $N\to\infty$. 
Concerning the case $a,c>0$, existence of solutions of (\ref{disc-eq})-(\ref{dir}) is guaranteed when the associated polynomial system has at least one positive root but, again, the literature on 
this topic is scarce. 
Some recent results have been obtained (see e.g. 
\cite{wang}), 
but the conditions do not apply to our  specific case.

  \section{The variational approach}

In this section, we shall briefly describe the main aspects of the variational method, adapted to the present boundary value problems. 
This aims, on the one hand, to give alternative proofs of some of the preceding results and, on the other hand, to propose a useful tool in order to tackle the multiplicity problem suggested in the previous section, with the help of  linking-type theorems. 
This goal  shall be pursued in a forthcoming paper.

For the Dirichlet case, define the functional 
$\mathcal I:(0,+\infty)^{N-1}\to \R$ given by 
\begin{equation}
\label{functional}
\mathcal I(u)= \frac 12\sum_{x=1}^N (\Delta u_{x-1})^2+\frac a4 \sum_{x=1}^{N-1} u_x^4 
+ \frac b2 \sum_{x=1}^{N-1} xu_x^2 - 
 \frac c2 \sum_{x=1}^{N-1} \frac 1{u_x^2},  
\end{equation}
where it is assumed that $u_0=D_0$ and $u_N=D_N$. 
 It is straightforwardly verified that the critical points of $\mathcal I$ coincide with the positive solutions of (\ref{disc-eq})-(\ref{dir}). Next, for $x=1,\ldots, N-1$ compute
$$\frac {\partial \mathcal I}{\partial u_x}= 2u_x - (u_{x-1} + u_{x+1}) 
+ au_x^3 + bx u_x + \frac c{u_x^3}.
$$
This shows that $u$ is a critical point of $\mathcal I$ if and only if $P_x(u)=0$ for all $x=1,\ldots, N-1$, where $P_x$ are the polynomials defined in the previous section.

Analogously for condition (\ref{disc-robin}), 
the functional $\mathcal J:(0,+\infty)^{N+1}\to \R$ defined by 
  $$\mathcal J(u):= \mathcal I(u) + F_0(u_0) - F_N(u_N),$$
  with $\mathcal I$ defined as in (\ref{functional}) and 
  $$F_0(s):=\int_0^s f_0(t)\, dt, \qquad F_N(s):=\int_0^s f_N(t)\, dt.
  $$
  Here, if $u$ is a critical point of 
  $\mathcal J$, then  taking all the directional derivatives $\frac{\partial \mathcal J}{\partial \nu}(u)$
with $\nu_0=\nu_N=0$ it is deduced 
that $u$ satisfies (\ref{disc-eq}).
Next, for arbitrary $\nu\ne 0$, summation by parts yields
$$0= \frac{\partial \mathcal J}{\partial \nu}(u) =
f_0(u_0)\nu_0 - f_N(u_N)\nu_N +\Delta u_{N-1}\nu_N -\Delta u_0\nu_0,
$$
so the Robin condition is 
verified by 
taking firstly $\nu_0=0\ne \nu_N$ and next $\nu_0\ne 0= \nu_N$.
In order to verify that the  critical 
points of $\mathcal J$ coincide with the solutions of the problem, let us simply observe that 
$$\frac{\partial \mathcal J}{\partial u_x} = \frac{\partial \mathcal I}{\partial u_x} \qquad x=1,\ldots, N-1
$$
and
$$\frac{\partial \mathcal J}{\partial u_0}= 
f_0(u_0) + u_0 - u_1,\qquad \frac{\partial \mathcal J}{\partial u_N} = -f_N(u_N) + u_N - u_{N-1}.
$$

It is observed that the 
variational formulation provides an immediate resolution of the cases in which $a$ and $c$ have different signs, since a coercivity argument can be applied. For the Dirichlet case  
assume, for example, that $a>0>c$, then $\mathcal I(u)\to+\infty$ as some coordinate of 
$u$ tends to $0$ or to $+\infty$ which, in turn, implies that $\mathcal I$ achieves an absolute minimum on $(0,+\infty)^{N-1}$. A similar conclusion follows when $a<0<c$ and also for the Robin case, under appropriate assumptions on $f_0$ and $f_N$.

\section{Conclusion}

Here, a discretisation of the Ermakov-Painlev\'e II equation subject to certain boundary conditions has involved the application of both topological and 
variational approaches. The extension of the latter to a range of discretisation problems is the subject of current investigation. In addition, potential application of the B\"acklund transformation for Painlev\'e XXXIV equation and its link to the Ermakov-Painlev\'e II equation is to be investigated in connection with discretisation of the latter with appropriate concomitant boundary conditions.

\end{document}